\definecolor{citecol}{RGB}{145, 1, 1}
\theoremstyle{plain}
\newtheorem {lemma}{Lemma}[section] 
\newtheorem {theorem}[lemma]{Theorem}
\newtheorem {fact}[lemma]{Fact}
\newtheorem {thm}[lemma]{Theorem}
\newtheorem {cor}[lemma]{Corollary}
\newtheorem {prop}[lemma]{Proposition}
\theoremstyle{definition}
\newtheorem {remark}[lemma]{Remark}
\newtheorem {example}[lemma]{Example}
\theoremstyle{definition}
\newtheorem{deff}[lemma]{Definition}{}
\newcommand{\M}{\operatorname{\mathbb M}}
\newcommand{\reg}{\operatorname{reg}}
\newcommand{\ssink}{\operatorname{sink}}
\newcommand{\K}{\mathsf{k}}
\def\M{\mathbb{M}}
\newcommand{\SP}{\operatorname{SP}}
\begin{document}

\title[Sandpile monoids and weighted Leavitt path algebras]{ON STRUCTURAL CONNECTIONS BETWEEN SANDPILE MONOIDS
	AND WEIGHTED LEAVITT PATH ALGEBRAS}


\author{Roozbeh Hazrat}\address{
Centre for Research in Mathematics and Data Science\\
Western Sydney University\\
Australia}
\email{r.hazrat@westernsydney.edu.au}

\author{Tran Giang  Nam}
\address{Institute of Mathematics, VAST, 18 Hoang Quoc Viet, Cau Giay, Hanoi, Vietnam}
\email{tgnam@math.ac.vn}



\begin{abstract}
In this article, we establish the relations between a sandpile graph, its sandpile monoid and the weighted Leavitt path algebra associated with it. Namely, we show that the lattice of all idempotents of the sandpile monoid $\text{SP}(E)$ of a sandpile graph $E$ is both isomorphic to the lattice of all nonempty saturated hereditary subsets of $E$, the lattice of all order-ideals of $\text{SP}(E)$ and the lattice of all ideals of the weighted Leavitt path algebra $L_{\K}(E, \omega)$ generated by vertices. Also, we describe the sandpile group of a sandpile graph $E$ via archimedean classes of $\text{SP}(E)$, and prove that all maximal subgroups of $\text{SP}(E)$ are exactly the Grothendieck groups of these archimedean classes. Finally, we give the structure of the Leavitt path algebra $L_{\K}(E)$ of a sandpile graph $E$ via a finite chain of graded ideals being invariant under every graded automorphism of $L_{\K}(E)$, and completely describe the structure of $L_{\K}(E)$ such that the lattice of all idempotents of $\text{SP}(E)$ is a chain. Consequently, we completely describe the structure of the weighted Leavitt path algebra of a sandpile graph $E$ such that $\text{SP}(E)$ has exactly two idempotents. \medskip

\textbf{Mathematics Subject Classifications 2020}: 16S88, 05C57, 16S50\medskip

\textbf{Key words}: Sandpile monoid and group; Weighted Leavitt path algebra.
\end{abstract}


\maketitle

\section{Introduction} \label{introkai}

The notion of abelian sandpile models was invented in $1987$ by Bak, Tang and Wiesenfeld~\cite{bak}. While defined by a simple local rule, it produces self-similar global patterns that call for an explanation.  The models have been used to describe phenomena such as forest fires, traffic jams, stock market fluctuations, etc. The book of Bak~\cite{bakbook} describes how  events in nature apparently follow this type of behaviour. In~\cite{D1} Dhar systematically associated finite commutative monoid and groups to  sandpile models, now called respectively \emph{sandpile monoids} and \emph{sandpile groups}, and championed the use of them as an invariant which proved to capture many properties of the model. These algebraic structures constitute one of the main themes of the subject and they have been extensively studied (see, e.g., \cite{T, BT,CGGMS} and references there). The abelian sandpile model was independently discovered in $1991$ by Bj\"{o}rner, Lov\'{a}sz and Shor \cite{BLS}, who called it {\it chip-firing}. Indeed, in the last two decades the subject has been enriched by an exhilarating interaction of numerous areas of mathematics, including statistical physics, combinatorics, free boundary PDE, probability, potential theory, number theory and group theory; refer to Levine and Peres's recent survey on the subject \cite{LevinePeres} in more details.

In a different realm, the notion of Leavitt path algebras $L_{\K}(E)$ associated to directed graphs $E$ were introduced by Abrams and Aranda Pino in \cite{aap05}, and independently Ara, Moreno, and Pardo in \cite{amp}. These are a generalization of algebras introduced by William Leavitt in  1962~\cite{vitt62} as a ``universal'' algebra  $A$ of type $(1,n)$, so that  $A\cong A^{n}$ as right $A$-modules, where $n\in \mathbb N^+$. It was established quite early on in the theory that Leavitt path algebras only produce rings of ranks $(1,n)$. In \cite{vitt62} Leavitt further constructed rings of type $(m,n)$, where $1<m<n$ and showed that these algebras surprisingly are domains.  When $m\ge 2$, this universal ring is not realizable as a Leavitt path algebra. With this in mind, the notion of {\it weighted}  Leavitt path algebras $L_{\K}(E, w)$ associated to weighted graphs $(E, w)$  were introduced by the first author in \cite{H} (see \cite{Pre} for a nice overview of this topic).  The weighted Leavitt path algebras provide a natural context in which all of Leavitt's algebras (corresponding to any pair $m, n \in \mathbb{N}$) can be realized as a specific example.

The study of the commutative monoid $\mathcal{V}(A)$ of isomorphism classes of finitely generated projective left modules of a unital ring $A$ (with operation $\oplus$) goes back to the work of Grothendieck and Serre.  For a Leavitt path algebra $L_{\K}(E)$, the monoid $\mathcal{V}(L_{\K}(E))$ has received substantial attention since the introduction of the topic. Furthermore, the monoid $\mathcal{V}(L_{\K}(E, w))$ has been described completely by the first author in \cite{H}, and subsequently by Preusser in \cite{P}.  
In~\cite{GeneRooz} Abrams and the first author established that these monoids could be naturally related to the sandpile monoids of graphs. This relationship allows us to associate weighted Leavitt path algebras to the theory of sandpile models, thereby opening up an avenue by which to investigate sandpile models via the structure of weighted Leavitt path algebras, and vice versa. 

In this article, we investigate the relations between a sandpile graph, its sandpile monoid and the weighted Leavitt path algebra associated with it.  In particular we concentrate on order-ideals and idempotents of a sandpile monoid and establish a relation between them and certain ideals and the structure of weighted Leavitt path algebras.

 In Section \ref{weightedgraphssec2}, we describe the structure of sandpile monoid $\text{SP}(E)$ of a sandpile graph $E$ in terms of its sandpile submonoids $\text{SP}(E_H)$, where $E_H$ is the restriction graph of $E$ to an nonempty saturated hereditary subset $H$ of $E$, and show that the lattice of all order-ideals of $\text{SP}(E)$ is isomorphic to the lattice of all nonempty saturated hereditary subsets of $E$ (Theorem \ref{sandmonstruc}). Consequently, we obtain that every isomorphism between two sandpile monoids $\text{SP}(E)$ and $\text{SP}(F)$ induces an isomorphism between the lattices of all nonempty saturated hereditary subsets of $E$ and $F$ (Corollary \ref{sat-her-lat-iso}).

In Section \ref{weightedgraphssec3}, based on Babai and Toumpakari's result \cite[Theorem 4.13]{BT} and Theorem \ref{sandmonstruc}, we describe all idempotents and all order-ideals of the sandpile monoid $\text{SP}(E)$ of a sandpile graph $E$ in terms of the combinatorics of $E$. In particular, we obtain that they are not only isomorphic to each other as lattices, but also isomorphic to the lattice of all nonempty saturated hereditary subsets of $E$ (Theorem \ref{idem-fiter-here-ordideal}). Also,  we prove that a sandpile group $G(E)$ is the direct limit of the Grothendieck groups of  archimedean classes of $\text{SP}(E)$ and obtain that the Grothendieck group of an archimedean class $[x]$ is exactly the sandpile group of the sandpile graph arising from the support of $x$, as well as show that all maximal subgroups of $\text{SP}(E)$ are exactly the Grothendieck groups of  archimedean classes of $\text{SP}(E)$ (Theorem \ref{arch-theo}).

In Section \ref{weightedgraphssec4}, we make a bridge between the structure of sandpile graphs and the algebraic structure of Leavitt path algebras. Based on Theorem \ref{idem-fiter-here-ordideal},
we show that the lattice of all idempotents of the sandpile momoid of a sandpile graph $E$ is both isomorphic to the lattice of all nonzero graded ideals of the Leavitt path algebra $L_{\K}(E)$ and the lattice of all ideals the weighted Leavitt path algebra $L_{\K}(E, \omega)$ generated by vertices (Theorem \ref{classideal}). Also, we give the structure of the Leavitt path algebra $L_{\K}(E)$ of a sandpile graph $E$ via a finite chain of graded ideals being invariant under every graded automorphism of $L_{\K}(E)$, which extends Zelmanov et al.'s result \cite[Theorem 1]{Zel} to the sandpile graph setting, and completely describe the structure of $L_{\K}(E)$ such that the lattice of all idempotents of $\text{SP}(E)$ is a chain (Theorem \ref{structheo1}). Consequently, we completely describe the structure of the (weighted) Leavitt path algebra of a sandpile graph $E$ such that  $\text{SP}(E)$  has exactly two idempotents (Theorem \ref{twoidem}). 

Throughout we write $\mathbb N$ for the set of non-negative integers, and $\mathbb N^+$ for the set of positive integers.

\section{Sandpile monoids and groups} \label{weightedgraphssec2}
The main aim of this section is to describe the structure of sandpile monoid $\text{SP}(E)$ of a sandpile graph $E$ in terms of its sandpile submonoids $\text{SP}(E_H)$, where $E_H$ is the restriction graph of $E$ to an nonempty saturated hereditary subset $H$ of $E$, as well as to show that the lattice of all order-ideals of $\text{SP}(E)$ is isomorphic to the lattice of all nonempty saturated hereditary subsets of $E$ (Theorem \ref{sandmonstruc}). Consequently, we obtain  that an isomorphism between two sandpile monoids $\text{SP}(E)$ and $\text{SP}(F)$ yields an isomorphism between the lattices of all nonempty saturated hereditary subsets of $E$ and $F$ (Corollary \ref{sat-her-lat-iso}).\medskip

In order to define sandpile monoids  we need to recall the notion of a graph. 

A \emph{directed graph} is a quadruple $E=(E^0,E^1,s,r)$, where $E^0$ and $E^1$ are sets and $s,r:E^1\rightarrow E^0$ are maps.   Throughout, ``graph" will always mean ``directed graph".  The elements of $E^0$ are called 
\emph{vertices} and the elements of $E^1$ \emph{edges}.   (We allow the empty set to be viewed as a graph with $E^0 = E^1 = \emptyset.$)   If $e$ is an edge, then $s(e)$ is called its \emph{source} and $r(e)$ its \emph{range}. If $v$ is a vertex and $e$ an edge, we say that $v$ {\it emits} $e$ if $s(e)=v$, and $v$ {\it receives} $e$ if $r(e)=v$.   An edge $e$ is called a {\it loop at} $v$ in case $s(e) = v = r(e)$.  
A vertex is called a
{\it sink} if it emits no edges, and is called \emph{irrelevant} in case it emits exactly one edge.  A graph is called {\it reduced} if it contains no irrelevant vertices.  
A vertex is called \emph{regular} if it is not a sink and does not emit infinitely many edges.  The subset of $E^0$ consisting of all the regular vertices is denoted by $E^0_{\reg}$. Similarly, the subset of $E^0$ consisting of all the sinks  is denoted by $E^0_{\ssink}$.

The \emph{out-degree} of a vertex $v$, denoted by $\text{out-deg}(v)$, is defined as $|s^{-1}(v)|$. A graph is called \emph{row-finite} if any vertex emits a finite number (possibly zero) of edges. The graph $E$ is called  \emph{finite} if $E^0$ and $E^1$ are finite sets.   

A  {\it path}  $p$ in $E$ is
a sequence $p=e_{1}e_{2}\cdots e_{n}$ of edges in $E$ such that
$r(e_{i})=s(e_{i+1})$ for $1\leq i\leq n-1$. We define $s(p) = s(e_{1})$, and $r(p) =r(e_{n})$. 
By definition, the \emph{length} $|p|$ of $p$ is $n$. We assign the length zero to vertices.  A \emph{closed path} (based at $v$) is a  path $p$ such that $s(p)=r(p)=v$. A \emph{cycle} (based at $v$) is a closed path $p=e_1 e_2 \cdots e_n$ based at $v$ such that $s(e_i)\neq s(e_j)$ for any $i\neq j$.  An edge $f\in E^1$ is called an \emph{exit} of a cycle $e_1\cdots e_n$ if there is an $1\leq i \leq n$ such that $s(f)=s(e_i)$ and $f\neq e_i$.

For every strongly connected cyclic component $C$ of $E$, we denote by $C^0$ the set of all vertices in $E$ which lie on $C$.


\begin{deff}[{\cite[Defenition 2.1]{BT}}]\label{sanddefgraph}
	A finite directed graph $E$ is called a \emph{sandpile graph} if $E$ has a unique sink (denote it by $s$), and for every $v\in E^0$ there is a  path $p$ in $E$ such that $s(p)=v$ and $r(p) = s$.  
\end{deff}

For clarification, we illustrate Definition \ref{sanddefgraph} by presenting the following example.

\begin{example}
	(1)  The following graph
	$$\xymatrix{
		\!\!\!    \bullet^s   &   \bullet^v \ar@{.}[l]  \ar@{.}@/_{5pt}/ [l]  \ar@/_{10pt}/ [l]_{f_1}  \ar@/^{10pt}/ [l]^{f_k}     \ar@{.}@(l,d) \ar@(ur,dr)^{e_{1}} \ar@(r,d)^{e_{2}} \ar@(dr,dl)^{e_{3}} 
		\ar@{.}@(l,u) \ar@(u,r)^{e_n}  \ar@{.}@(ul,ur)& 
	}$$	is a sandpile graph.
	
	(2) The following  graph
	$$\xymatrix{\bullet^{w}\ar[r]^{e}&
		\bullet^{v} \ar@(ul,ur)^f\ar[r]^g \ar@(dl,dr)_l& \bullet^{s}&\bullet^{u}\ar[l]_{h}}$$ is a sandpile graph.
\end{example}

\begin{deff}[{\cite[Defenition 2.8]{BT}}]\label{sandmonoid}
	Let $E$ be a sandpile graph. Then the {\it sandpile monoid} of $E$, denoted by $\text{SP}(E)$, is the free commutative monoid on a set of generators $E^0$, modulo relations given by  
	\begin{center}
		$s = 0$\quad and\quad $|s^{-1}(v)|v = \sum_{e\in s^{-1}(v)}r(e),$
	\end{center}
	for all $v\in E_{\reg}^0$. 
	
	Phrased another way, let $\langle v\mid v\in E^0\rangle$ be the free commutative monoid on a set of generators $E^0$ and $\langle s =0; |s^{-1}(v)|v = \sum_{e\in s^{-1}(v)}r(e) \text{ for all } v\in E_{\reg}^0 \rangle$ its congruence generated by the above relations. Then
	
	\[\text{SP}(E) = \frac{\langle v \mid v \in E^0\rangle}{\langle s = 0; |s^{-1}(v)|v= \sum_{e\in s^{-1}(v)}r(e)\text{ for all } v\in E^0\setminus \{s\}\rangle }.\]
\end{deff}

For clarification, we illustrate the sandpile monoid of a sandpile graph by presenting the following example. 

\begin{example}
	(1) Let $E_k$ be the sandpile graph which consists of two vertices and $k$ edges from the the vertex to the sinks, that means, $E_k$ is the graph pictured here.
	$$\xymatrix{
		\!\!\!    \bullet^s   &   \bullet^v \ar@{.}[l]  \ar@{.}@/_{5pt}/ [l]  \ar@/_{10pt}/ [l]_{e_1}  \ar@/^{10pt}/ [l]^{e_k}     & 
	}$$
	
	Then we have  $$\text{SP}(E) = \frac{\langle v, s\rangle}{(s=0; kv = s)}\cong \frac{\langle v\rangle}{(kv = 0)}\cong \mathbb{Z}_k.$$
	
	(2) Let $E$ be the sandpile graph:
	
	$$\xymatrix{\bullet^{v} \ar@(ul,ur) \ar@(lu,ld) \ar@(dl,dr) \ar[r]& \bullet^{s}}$$ 	\medskip
	
	Then we have  $$\text{SP}(E) = \frac{\langle v, s\rangle}{(s=0; 4v = 3v +s)}\cong \frac{\langle v\rangle}{(4v = 3v)},$$ that means, 
	\begin{center}
		$\text{SP}(E) = \{0, v, 2v, 3v\}$, with relation $4v = 3v$.
	\end{center}
	
	(3) Let $E$ be the sandpile graph
	$$\xymatrix{
		\!\!\!    \bullet^s   &   \bullet^v \ar@{.}[l]  \ar@{.}@/_{5pt}/ [l]  \ar@/_{10pt}/ [l]_{f_1}  \ar@/^{10pt}/ [l]^{f_k}     \ar@{.}@(l,d) \ar@(ur,dr)^{e_{1}} \ar@(r,d)^{e_{2}} \ar@(dr,dl)^{e_{3}} 
		\ar@{.}@(l,u) \ar@(u,r)^{e_n}  \ar@{.}@(ul,ur)& 
	}$$
	
	Then we have  $$\text{SP}(E) = \frac{\langle v, s\rangle}{\langle s=0; (n+k)v = nv + ks \rangle}\cong \frac{\langle v\rangle}{\langle (n+k)v = nv \rangle},$$
	that means, 
	\begin{center}
		$\text{SP}(E) = \{0, v, \hdots,nv,\hdots, (n+k-1)v\}$, with relation $(n+k)v = nv$.	
	\end{center}
	\end{example}

It is worth mentioning that every  finite abelian group may be realized as a sandpile monoid (see \cite[Page 256]{CGGMS}), and  Chapman et.al. \cite[Theorems 5.1 and 5.4]{CGGMS} showed two infinite families of finite commutative monoids that cannot be realized as sandpile monoids of sandpile graphs.\medskip

Let $E$ be a sandpile graph. There is an explicit description of the congruence on the free commutative monoid $\langle v\mid v\in E^0\rangle$ given by the defining relations of $\text{SP}(E)$ in Definition \ref{sandmonoid}, as follows. A nonzero element of $\langle v\mid v\in E^0\rangle$ can be written uniquely up to permutation as $\sum_{i=1}^{n}k_iv_{i}$, where $v_{i}$ are distinct vertices and $k_i\in \mathbb N^+$. Define a binary relation
$\rightarrow_{1}$ on  the free commutative monoid $\langle v\mid v\in E^0\rangle$ by 

\begin{equation}\label{hfgtrgt655}
\sum_{i=1}^{n}k_iv_{i} \ \longrightarrow_{1}  \left\{
\begin{array}{lcl}
\sum_{i\neq
	j}k_iv_{i} &  & \text{if } v_j = s\\
\Big( \sum_{i\neq
	j}k_iv_{i} \Big) +(k_j-|s^{-1}(v_j)|)v_j+ \sum_{e\in s^{-1}(v_j)}r(e))&  & \text{if } v_j \neq s,
\end{array}%
\right.
\end{equation}
whenever $j\in \{1, \hdots, n\}$ and 
$k_j\geq |s^{-1}(v_j)|$. Let $\rightarrow$ be the transitive and reflexive closure of $\rightarrow_{1}$
on $\langle v\mid v\in E^0\rangle$.  Namely 
\begin{equation}\label{hfgtrgt6551}
a\rightarrow b    \ \  \ \text{ if }  a=b, \ \mbox{or}  \ a=a_0 \rightarrow_1 a_1 \rightarrow_1 \dots \rightarrow_1 a_k=b.
\end{equation}

\noindent
Finally, let   $\sim$   be the congruence on $\langle v\mid v\in E^0\rangle$ generated by the relation $\rightarrow$.   That means, $a\sim b$ in case  there is a string $a=a_0, a_1,\dots, a_n=b$ in $\langle v\mid v\in E^0\rangle$ such that $a_i\rightarrow_1 a_{i+1}$ or $a_{i+1}\rightarrow_1 a_{i}$ for each $0\leq i \leq n-1$. We then have 
$$\text{SP}(E)=\mathbb \langle v\mid v\in E^0\rangle/\sim.$$
\noindent
To avoid cumbersome equivalence class notation, it is standard (but not technically correct) to denote the elements of $\text{SP}(E)$ and the elements of $\langle v\mid v\in E^0\rangle$ using the same symbols.  For instance, we  will sometimes write $a=b$ in $\text{SP}(E)$ for  elements $a,b \in \mathbb \langle v\mid v\in E^0\rangle$. 

The {\it support} of an element $\alpha$ in $\mathbb \langle v\mid v\in E^0\rangle$, denoted by $\text{supp}(\alpha)\subseteq E^0$, is the set of basis elements appearing in the canonical expression of $\alpha$.\medskip

The following proposition shows that sandpile monoids have ``Confluence Property", which was established by Abrams and the first author in \cite[Lemma 3.1]{GeneRooz}. 

\begin{prop}[{\cite[Lemma 3.1]{GeneRooz}}]\label{Conflupro}
Let $E$ be a sandpile graph and $a, b\in\rm{SP}$$(E)$ nonzero elements. Then $a = b$ in $\rm{SP}$$(E)$  if and only if there is an element $c\in \mathbb \langle v\mid v\in E^0\rangle$ such that $a \rightarrow c$ and $b\rightarrow c$. 
\end{prop}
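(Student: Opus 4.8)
The plan is to prove the nontrivial direction: if $a = b$ in $\mathrm{SP}(E)$ with $a, b$ nonzero, then there exists $c \in \langle v \mid v \in E^0 \rangle$ with $a \to c$ and $b \to c$. The converse is immediate, since $a \to c$ and $b \to c$ both imply $a \sim c$ and $b \sim c$, hence $a = b$ in $\mathrm{SP}(E)$. The strategy is a standard Newman-style confluence argument adapted to the commutative-monoid rewriting system $\to_1$ defined in \eqref{hfgtrgt655}, combined with the hypothesis that $E$ is a sandpile graph (so every vertex has a path to the sink $s$ and $s$ is the only sink).

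First I would establish \emph{local confluence}: if $x \to_1 y_1$ and $x \to_1 y_2$, then there is $z$ with $y_1 \to z$ and $y_2 \to z$. Writing $x = \sum k_i v_i$, the two one-step reductions fire at vertices $v_j$ and $v_\ell$ (possibly $j = \ell$). If $j = \ell$ there is nothing to do unless the chosen multiplicities differ, but since the reduction at a fixed vertex $v_j$ with $k_j \ge |s^{-1}(v_j)|$ is deterministic in \eqref{hfgtrgt655}, we get $y_1 = y_2$. If $j \ne \ell$, the two ``firings'' occur at different coordinates and commute: firing $v_j$ does not decrease the coefficient of $v_\ell$ (it can only increase it, since $r(e)$ may equal $v_\ell$), so $v_\ell$ remains fireable after firing $v_j$, and vice versa; firing both in either order yields the same element $z$. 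This is the key combinatorial observation and it is genuinely where the argument lives, though the verification is a short bookkeeping exercise on the two cases of \eqref{hfgtrgt655}.

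Next I would address \emph{termination} (strong normalization) of $\to_1$. This is the step I expect to be the main obstacle, because $\to_1$ need \emph{not} terminate in general (firing a vertex on a cycle can feed coefficients back). The resolution uses that $E$ is a sandpile graph: every reduction sequence eventually pushes ``mass'' toward the sink $s$, where it is absorbed (the relation $s = 0$). One makes this precise with a well-founded measure — for instance, order vertices by the length of a shortest path to $s$, and argue via a suitable lexicographic or weighted sum that each $\to_1$ step that does not reach a normal form strictly decreases the measure, using that from every vertex there is a directed path to $s$ so coefficients cannot circulate forever without leaking into $s$. Alternatively, since $\mathrm{SP}(E)$ is a \emph{finite} monoid (Definition \ref{sanddefgraph}: $E$ finite, and the examples confirm finiteness), one can bound the total degree of reachable elements and invoke König's lemma on the (finitely branching) reduction tree to get that every element has a reachable normal form, then combine with local confluence. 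Once we have termination plus local confluence, Newman's Lemma gives global confluence: each element has a \emph{unique} normal form, and $a \sim b$ iff $a$ and $b$ have the same normal form.

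Finally I would conclude: given $a = b$ in $\mathrm{SP}(E)$, i.e. $a \sim b$, global confluence yields a common reduct — concretely, letting $c$ be the (common) normal form of $a$ and $b$, we have $a \to c$ and $b \to c$, which is exactly the claim. I would remark that since we are told to assume results stated earlier, and this proposition is quoted from \cite{GeneRooz}, the cleanest writeup may simply cite that source; but the self-contained argument above (local confluence by commuting firings at distinct vertices, plus termination forced by the sandpile condition, then Newman's Lemma) is the route I would take if a proof were required.
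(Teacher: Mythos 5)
Your overall strategy is workable, but note first that the paper itself gives no proof of this proposition: it is imported verbatim as \cite[Lemma 3.1]{GeneRooz}, so the only comparison available is with the proof in that source, which follows the template of the confluence lemma for graph monoids of Ara, Moreno and Pardo \cite{amp} — one establishes a diamond-type property for single transformations and then inducts on the length of the zigzag $a=a_0,a_1,\dots,a_n=b$ witnessing $a\sim b$, with no appeal to termination. Your route through Newman's Lemma is genuinely different, and it does work, but only because $\rightarrow_1$ really is strongly normalizing on a sandpile graph; that is precisely the point you must nail down rather than gesture at. The correct argument is the first one you sketch (Dhar's): the total degree of an element of $\langle v\mid v\in E^0\rangle$ is preserved by a firing at $v_j\neq s$ and strictly decreased by the $s$-removal rule in \eqref{hfgtrgt655}, so along an infinite $\rightarrow_1$-sequence only finitely many $s$-removals occur and all coefficients remain bounded; if some vertex $v$ fires infinitely often, then propagating along a path from $v$ to $s$ (which exists because $E$ is a sandpile graph) each successive vertex receives infinitely many chips and, having bounded coefficient, must itself fire infinitely often, until $s$ receives infinitely many chips — forcing infinitely many degree-decreasing $s$-removals, a contradiction. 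Your local confluence analysis is essentially fine, provided you also treat the critical pair between an $s$-removal and a firing at $v_j\neq s$ that creates new copies of $s$; this diamond closes in two steps on one side, which is all Newman's Lemma requires.

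The one genuine flaw is your proposed fallback: bounding the reachable set and invoking K\"onig's lemma yields at best weak normalization (every element reaches \emph{some} normal form), and weak normalization together with local confluence does not imply confluence — Newman's Lemma requires strong normalization, and there are standard counterexamples without it. So you cannot ``alternatively'' take that route; you must commit to the termination argument above, or else abandon Newman entirely and prove the stronger one-step diamond property as in \cite{GeneRooz}, which buys a proof that never mentions termination. With that correction the write-up is complete and correct.
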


Let $E$ be a graph and $H\subseteq E^0$. We say $H$ is  \emph{hereditary} if for any $e \in E^1$,  $s(e)\in H$ implies $r(e)\in H$. We say $H$ is {\it saturated} if  whenever $v\in E_{\reg}^0$ with the property that $r(s^{-1}(v))\subseteq H$, then $v\in H$. The {\it hereditary saturated closure} of $H$, denoted by $\overline{H}$, is the smallest hereditary and saturated subset of $E^0$ containing $H$.	We denote by $\mathcal{H}_E$ the set of all nonempty saturated hereditary subsets of $E^0$. If $E$ is a sandpile graph, then the set of all vertices in $E$ which do not connect to any cycle in $E$ is the smallest element in $(\mathcal{H}_E, \subseteq)$, and so $(\mathcal{H}_E, \subseteq)$ is a complete lattice with $H \vee H' = \overline{H \cup H'}$ and $H \wedge H' = H\cap H'$.

For any $H\in \mathcal{H}_E$, we denote by $E_H$ the {\it restriction graph}
	\begin{center}
		$E_H^0 := H$, \quad\quad $E_H^1 := \{e\in E^1\mid s(e)\in H\}$
	\end{center}	
	and the source and range functions in $E_H$ are exactly the source and range functions in $E$, restricted to $H$.

The following remark is easy and useful.

\begin{remark}\label{sathererem}
	Let $E$ be a sandpile graph. Then $E_H$ is a sandpile graph for all $H\in \mathcal{H}_E$.	
\end{remark}
\begin{proof}
	Let $H$ be a nonempty saturated hereditary subset of $E^0$. Then, there exists a vertex $v\in E^0$ such that $v\in H$. Since $E$ is a sandpile graph, there exists a path $p$ in $E$ such that $v= s(p)$ and $s = r(p)$. This shows that $s\in H$ (since $H$ is hereditary). It is obvious that for every vertex $w$ in $H$, there is a path $q$ in $E_H$ such that $w= s(q)$ and $r(q)= s$. Thus, $E_H$ is a sandpile graph.	
\end{proof}	

As an application of Proposition \ref{Conflupro}, we obtain the following useful proposition.

\begin{prop}\label{sandsubmon}
Let $E$ be a sandpile graph and $H\in \mathcal{H}_E$. Then, the map
	$\psi_H: \rm{SP}$$(E_H)\longrightarrow \rm{SP}$$(E)$, defined by $x \longmapsto x$, is an injective homomorphism of monoids. Consequently,
	$\rm{SP}$$(E_H)$ is a submonoid of $\rm{SP}$$(E)$.	
\end{prop}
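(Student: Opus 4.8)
The plan is to show that $\psi_H$ is well-defined, a homomorphism, and injective, in that order; the well-definedness and the homomorphism property are essentially formal, and the real content lies in injectivity, where Proposition~\ref{Conflupro} (the Confluence Property) does all the work.

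First I would check that $\psi_H$ is well-defined. An element of $\text{SP}(E_H)$ is a $\sim_{E_H}$-equivalence class of an element $\alpha \in \langle v \mid v \in H\rangle \subseteq \langle v \mid v \in E^0\rangle$. Since $H$ is saturated hereditary, every vertex $v \in H$ that is regular in $E$ is regular in $E_H$, and $s^{-1}_{E_H}(v) = s^{-1}_E(v)$ with $r(s^{-1}_E(v)) \subseteq H$; hence the defining relations of $\text{SP}(E_H)$ at such $v$ are literally the defining relations of $\text{SP}(E)$ restricted to $H$ (and the sink of $E_H$ is $s$, which is the sink of $E$, by Remark~\ref{sathererem}). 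Consequently each rewriting step $\to_1$ in $\langle v \mid v \in H\rangle$ is also a rewriting step $\to_1$ in $\langle v \mid v \in E^0\rangle$, so $\alpha \sim_{E_H} \beta$ implies $\alpha \sim_E \beta$, and $x \mapsto x$ is a well-defined map $\text{SP}(E_H) \to \text{SP}(E)$. That it is a monoid homomorphism is immediate, since addition in both monoids is induced from addition in the ambient free commutative monoids and the identity $0$ maps to $0$.

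The main step is injectivity. Suppose $x, y \in \text{SP}(E_H)$ with $\psi_H(x) = \psi_H(y)$; pick representatives $\alpha, \beta \in \langle v \mid v \in H\rangle$. If either is zero in $\text{SP}(E_H)$ we argue separately (here one uses that $\alpha \to c$ for some $c$ supported away from $H$ would already force $\alpha = 0$ in $\text{SP}(E_H)$, again because the rewriting at vertices of $H$ stays inside $H$ — the support of anything reachable from $\alpha$ via $\to$ is contained in $H$, as $r(s^{-1}_E(v)) \subseteq H$ for $v \in H$); so assume $\alpha, \beta$ are nonzero. By Proposition~\ref{Conflupro} applied in $\text{SP}(E)$, there is $c \in \langle v \mid v \in E^0\rangle$ with $\alpha \to c$ and $\beta \to c$ in $\langle v \mid v \in E^0\rangle$. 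Now the key observation: since $\alpha$ is supported in $H$ and $H$ is hereditary, every vertex appearing along any rewriting path starting from $\alpha$ lies in $H$ — each application of $\to_1$ at a vertex $v \in H$ removes copies of $v$ and adds vertices $r(e)$ with $s(e) = v \in H$, hence in $H$. Therefore $c$ is supported in $H$, and moreover the entire rewriting sequences $\alpha \to c$ and $\beta \to c$ take place inside $\langle v \mid v \in H\rangle$ using steps that are $\to_1$-steps for $E_H$ (by the identification of relations above). Thus $\alpha \to c$ and $\beta \to c$ in $\langle v \mid v \in H\rangle$, whence $\alpha \sim_{E_H} \beta$, i.e. $x = y$ in $\text{SP}(E_H)$.

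The one point requiring a little care — and the step I expect to be the main obstacle — is the zero case in the injectivity argument: showing that if $\psi_H(x) = 0$ in $\text{SP}(E)$ then $x = 0$ in $\text{SP}(E_H)$. One cannot directly invoke Proposition~\ref{Conflupro} since it is stated for nonzero elements, so instead I would argue that $\alpha = 0$ in $\text{SP}(E)$ means $\alpha \to \beta'$ for some $\beta'$ with $\alpha \to \beta' \leftarrow 0$... more precisely, that $\alpha \sim_E 0$ forces, by the support-containment observation, a rewriting chain witnessing $\alpha \sim_E 0$ all of whose terms are supported in $H$ (the terms can only shrink in support, as every $\to_1$-step from something supported in $H$ stays in $H$, and going backwards from $0$ one stays at $0$). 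Hence the same chain witnesses $\alpha \sim_{E_H} 0$. With well-definedness, the homomorphism property, and injectivity established, the final sentence of the proposition — that $\text{SP}(E_H)$ is (isomorphic to) a submonoid of $\text{SP}(E)$ — is immediate.
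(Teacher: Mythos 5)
Your proposal is correct and follows essentially the same route as the paper's proof: apply the Confluence Property (Proposition~\ref{Conflupro}) to two elements of $\mathrm{SP}(E_H)$ that become equal in $\mathrm{SP}(E)$, and observe that since $H$ is hereditary every rewriting step starting from an element supported in $H$ stays inside $H$, so the common reduct $c$ lies in $\langle v \mid v \in H\rangle$ and the equality already holds in $\mathrm{SP}(E_H)$. Your explicit treatment of well-definedness and of the zero case (which Proposition~\ref{Conflupro} formally excludes) is more careful than the paper's one-line argument, but it is the same idea.
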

\begin{proof} We note that $E_H$ is a sandpile graph, by Remark \ref{sathererem}.
	Let $a, b\in \text{SP}(E_H)$ such that $a = b$ in $\text{SP}(E)$. By Proposition \ref{Conflupro}, there is an element $c\in \langle v\mid v\in E^0\rangle$ such that $a \rightarrow c$ and $b\rightarrow c$. Since $H$ is hereditary, all the transformations occur in $E_H$, and so $c\in \langle v\mid v\in H^0\rangle$. This implies that $a = b$ in $\text{SP}(E_H)$. Hence, $\psi_H$ is  an injective homomorphism of monoids, thus finishing the proof.
\end{proof}	

Let $(M, +)$ be a commutative semigroup. Define an an equivalence relation $\sim$ on $M\times M$ by setting: $(m_1, m_2)\sim (n_1, n_2)$ if $m_1 + n_2 +x = m_2 + n_1 +x$ for some $x\in M$. Let $G(M)$ denote the equivalence classes in $M\times M$ under $\sim$ (we denote an individual class by $[\ ]_0$), and define $+$ on $G(M)$ as follows: \[[(m_1, m_2)]_0 + [(n_1, n_2)]_0 = [(m_1 +n_1, m_2+n_2)]_0.\]
It is not hard to check that $(G(M), +)$ is abelian group, and called the {\it Grothendieck group} of $M$. In particular, the identity of $G(M)$ is $(m, m)]_0$ for any $m\in M$, and $-[(m, n)]_0 = (n, m)]_0$ for all $m, n\in M$. The map $\phi: M\longrightarrow G(M)$, $m\longmapsto [m, m +x]_0$ for any $x\in M$, is a monoid homomorphism. Moreover, $G(M)$ satisfies the Universal Property: for any abelian group $G'$ and for any monoid homomorphism $\psi: M\longrightarrow G'$, there is a unique group homomorphism $\lambda: G(M)\longrightarrow G'$ such that $\lambda \phi = \psi$. If, in addition, $M$ is finite, then $G(M)$ is  isomorphic to the smallest ideal of $M$. In particular, $G(M) \cong \bigcap_{x\in M}(x + M)$ (see \cite[Lemma 2.5]{GeneRooz}).

\begin{deff}[{\cite{BT, CGGMS}}]\label{sandpilegrp}
	For any sandpile graph $E$, the {\it sandpile group} $G(E)$ of $E$ is the Grothendieck group of $\rm{SP}$$(E)$.
\end{deff}

Let $E$ be a sandpile graph and $a, b\in \text{SP}(E)$. We say that $a$ is {\it accessible} from $b$ if $a = b +x$ for some $x\in \text{SP}(E)$. We say that $a$ is {\it recurrent} if $a$ is accessible from every element $c\in \text{SP}(E)$ (see, e.g., \cite[Page 7]{BT}).

\begin{prop}[{\cite[Corollary 3.2]{BT}}]\label{recur}
	For every sandpile graph $E$, $G(E)$ is exactly the set of all recurrent elements of $\rm{SP}$$(E)$. 	
\end{prop}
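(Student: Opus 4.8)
The plan is to deduce the statement from the general structural fact about finite commutative monoids recorded in Section~\ref{weightedgraphssec2} (following \cite[Lemma 2.5]{GeneRooz}): if $M$ is a finite commutative monoid, then $\bigcap_{x\in M}(x+M)$ is the smallest ideal of $M$, it carries the structure of an abelian group, and the canonical homomorphism $\phi\colon M\to G(M)$ restricts to an isomorphism of this smallest ideal onto $G(M)$. First I would note that $\text{SP}(E)$ is indeed a finite commutative monoid: via the reductions $\rightarrow$ of \eqref{hfgtrgt655}--\eqref{hfgtrgt6551}, every element of $\langle v\mid v\in E^0\rangle$ is related to a \emph{stable} element, one whose coefficient at each $v\in E^0_{\reg}$ is less than $|s^{-1}(v)|$ and whose coefficient at $s$ is $0$, and this stabilization terminates precisely because $E$ is a sandpile graph; hence $|\text{SP}(E)|\le\prod_{v\in E^0_{\reg}}|s^{-1}(v)|<\infty$, so the quoted fact applies with $M=\text{SP}(E)$.

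The core of the argument is then a direct unwinding of the definitions. By definition, an element $a\in\text{SP}(E)$ is recurrent if and only if it is accessible from every $c\in\text{SP}(E)$, i.e.\ if and only if for each $c$ there is some $x\in\text{SP}(E)$ with $a=c+x$; equivalently,
\[
a\in\bigcap_{c\in\text{SP}(E)}\bigl(c+\text{SP}(E)\bigr).
\]
Thus the set of all recurrent elements of $\text{SP}(E)$ is \emph{equal} (not merely isomorphic) to the smallest ideal $\bigcap_{c\in\text{SP}(E)}(c+\text{SP}(E))$ of $\text{SP}(E)$. Applying the quoted fact to $M=\text{SP}(E)$ identifies this smallest ideal, via $\phi$, with $G(\text{SP}(E))=G(E)$; under this identification $G(E)$ is exactly the set of recurrent elements, which is the assertion.

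The only non-formal ingredient is the structure theory invoked in the last step — that for finite commutative $M$ the set $\bigcap_{x\in M}(x+M)$ is nonempty, is the unique minimal ideal, is a group, and is carried isomorphically onto $G(M)$ by $\phi$ — and this is exactly \cite[Lemma 2.5]{GeneRooz}, already available to us, so no real obstacle remains. If one preferred to avoid citing it, the one lemma worth proving, which I expect to be the main (though routine) obstacle, is the following: writing $p=\sum_{x\in M}x$, one has $p+M=\bigcap_{x\in M}(x+M)=:K$ (since $p=c+\sum_{x\neq c}x$ gives $p+M\subseteq c+M$ for each $c$, and $c=p$ gives the reverse inclusion for the intersection); for any $k_0\in K$ minimality forces $k_0+K=K$, so there is $e\in K$ with $k_0+e=k_0$, and then $e$ is an identity for $K$ and every element of $K$ has an inverse in $K$, so $(K,+,e)$ is an abelian group; finally the surjective homomorphism $M\to K$, $m\mapsto e+m$, together with the universal property of $G(M)$ and the observation that $\phi(m)=\phi(e+m)$ for all $m$, shows $\phi|_K\colon K\to G(M)$ is an isomorphism. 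Combining this with the two paragraphs above completes the proof.
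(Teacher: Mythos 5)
Your proposal is correct and follows essentially the same route as the paper: both identify $G(E)$ with the smallest ideal $\bigcap_{c\in\text{SP}(E)}(c+\text{SP}(E))$ of the finite commutative monoid $\text{SP}(E)$ via \cite[Lemma 2.5]{GeneRooz}, and observe that this intersection is by definition the set of recurrent elements. Your additional self-contained verification of the quoted lemma is a harmless (and correct) supplement, but not a different approach.
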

\begin{proof}
Since $\text{SP}(E)$ is a finite commutative monoid and by \cite[Lemma 2.5]{GeneRooz}, $G(E)$ is the smallest ideal of $\text{SP}(E)$, and so $c + G(E) = G(E)$ for all $c\in \text{SP}(E)$,
	which yields the statement.
\end{proof}

Recall (see, e.g., \cite[Page 131]{TheBook}) that an {\it order-ideal} of a commutative monoid $M$ is a submonoid $I$ of $M$ such that, for any $x, y\in M$, if $x+y \in I$ then $x, y\in I$. An order-ideal may also be described as a submonoid $I$ of $M$ which is hereditary with respect to the canonical preorder $\le$ on $M$: $x\le y$ and $y\in I$ imply $x\in I$, where  the preorder $\le$ on $M$ is defined by
setting $x\le y$ if  $y = x + m$ for some $m\in M$. For each $X\subseteq M$, the order-ideal of $M$ generated by $X$ is the set \[\langle X \rangle :=\{m\in M\mid m\le \sum^n_{i=1}x_i,\ n\in \mathbb{N}^+,\ x_i\in X\}.\]
The set $\mathcal{L}(M)$ of all order-ideals of $M$ forms a complete lattice such that the join of two elements $I, J$ is exactly the order-ideal of $M$ generated by $I +J$. 

An element $m\in M$ is an {\it idempotent} if $m + m = m$. We denote by $\text{Idem}(M)$ the set of all idempotents of $M$. Then $(\text{Idem}(M), \le)$ is a join-semilattice with the join of two elements $m, m'$ is $m+m'$. If, in addition, $M$ is finite, then $(\text{Idem}(M), \le)$ is a lattice with $m \wedge m' = \sum_{x \le m, m'}x$.

We are now in a position to establish the main result of this section describing the sandpile monoid $\text{SP}(E)$ and the sandpile group $G(E)$ of a sandpile graph $E$ in terms of saturated hereditary subsets of $E$.

\begin{thm}\label{sandmonstruc}
Let $E$ be a sandpile graph and $S_E$ the set of all vertices in $E$ which do not connect to any cycle in $E$. Then the following statements hold:
	
$(1)$ $\rm{SP}$$(E_H)$ is a submonoid of $\rm{SP}$$(E_{H'})$ for all $H$ and $H'\in \mathcal{H}_E$ with $H\subseteq H'$;
	
$(2)$ $\rm{SP}$$(E_{S_H}) = Z(\rm{SP}$$(E))$, where $Z(\rm{SP}$$(E))$ is the unit group of $\rm{SP}$$(E)$;
	
$(3)$ For all $H\in \mathcal{H}_E$, $\rm{SP}$$(E_H)$ is an order-ideal of $\rm{SP}$$(E)$  containing $Z(\rm{SP}$$(E))$, and $\rm{SP}$$(E_H) = \langle H\rangle$;
	
$(4)$ For every order-ideal $I$ of $\rm{SP}$$(E)$, $I = \rm{SP}$$(E_H)$, where $H= I\cap E^0\in \mathcal{H}_E$;
	
$(5)$ $\mathcal{H}_E\cong \mathcal{L}(\rm{SP}$$(E))$ as lattices;
	
$(6)$ $G(E) = \varinjlim_{H\in \mathcal{H}_E} G(E_H)$, where $G(E_H) := G(\rm{SP}$$(E_H))$;
	
$(7)$ For all $H\in \mathcal{H}_E$, $G(E_H)= z_H + \rm{SP}$$(E_H)$, where $z_H :=\sum_{x\in \rm{Idem}(\rm{SP}(E_H))}x$. Consequently, $G(E_H)$ is a maximal subgroup of $\rm{SP}$$(E)$.
\end{thm}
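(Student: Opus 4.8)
The plan is to establish the seven parts in order, since each leans on the previous ones, with the heart of the argument lying in parts (3)--(5) and (7). For part (1), I would simply observe that $\mathcal{H}_{E_{H'}} = \{H \in \mathcal{H}_E : H \subseteq H'\}$, so $E_H = (E_{H'})_H$, and then apply Proposition \ref{sandsubmon} to the sandpile graph $E_{H'}$ (which is a sandpile graph by Remark \ref{sathererem}). For part (2): if $v \in S_E$ then no relation in Definition \ref{sandmonoid} ever reintroduces $v$ once it is consumed, so $v$ is eventually consumed entirely and $v$ generates (via the confluence relation $\rightarrow$) a unit; conversely, if $v$ connects to a cycle $C$, then firing around $C$ shows $nv \to nv$ for suitable $n$, so no positive multiple of $v$ can be a unit. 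Thus the unit group $Z(\mathrm{SP}(E))$ is exactly the submonoid generated by $S_E$, which equals $\mathrm{SP}(E_{S_E})$ by Proposition \ref{sandsubmon}; one also checks $S_E \in \mathcal{H}_E$ directly (it is the smallest element of $\mathcal{H}_E$, as noted in the text before the restriction-graph definition).

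For part (3), the key is the order-ideal property. Given $x + y \in \mathrm{SP}(E_H)$ with $x, y \in \mathrm{SP}(E)$, write $x + y$ in canonical form inside $\langle v \mid v \in H\rangle$ and use Proposition \ref{Conflupro} together with heredity of $H$: any reduction sequence witnessing $x + y \in \mathrm{SP}(E_H)$ uses only edges with source in $H$, and since $H$ is hereditary these reductions, applied to the separate summands of $x$ and $y$, keep their supports inside $H$; one needs the ``escape'' argument that if $\mathrm{supp}(x) \not\subseteq H$ then $\mathrm{supp}(x+y) \not\subseteq \overline{H} = H$ after reduction — this is where the confluence property does the real work. That $\mathrm{SP}(E_H) \supseteq Z(\mathrm{SP}(E)) = \mathrm{SP}(E_{S_E})$ follows from (1) since $S_E \subseteq H$. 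The identity $\mathrm{SP}(E_H) = \langle H \rangle$ is then a matter of noting $\langle H\rangle$ is the smallest order-ideal containing $H$, that $\mathrm{SP}(E_H)$ is an order-ideal containing $H$, and conversely every element of $\mathrm{SP}(E_H)$ is $\le$ a sum of vertices of $H$. Part (4) is the converse: given an order-ideal $I$, set $H = I \cap E^0$; heredity of $H$ follows because if $s(e) = v \in I$ then the defining relation forces $r(e) \le |s^{-1}(v)|v \in I$ (order-ideals are downward closed), and saturation is similar using the relation in reverse; then $I = \langle H \rangle = \mathrm{SP}(E_H)$ by (3) plus the fact that $I$ is generated as an order-ideal by its vertices. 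Part (5) is immediate from (3) and (4): the maps $H \mapsto \mathrm{SP}(E_H)$ and $I \mapsto I \cap E^0$ are mutually inverse, and both preserve inclusion, hence are lattice isomorphisms.

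For part (6), the direct limit is taken over the directed poset $(\mathcal{H}_E, \subseteq)$ with transition maps induced by the inclusions $\mathrm{SP}(E_H) \hookrightarrow \mathrm{SP}(E_{H'})$ of (1), functoriality of the Grothendieck group giving maps $G(E_H) \to G(E_{H'})$; since $E^0 \in \mathcal{H}_E$ is the top element, the colimit is just $G(E_{E^0}) = G(E)$, so this part is essentially formal once (1) is in hand. Part (7) is the subtlest: I would first show $z_H := \sum_{x \in \mathrm{Idem}(\mathrm{SP}(E_H))} x$ is itself an idempotent (finite commutative monoid, sum of idempotents using $m \wedge m'$ structure noted in the text) and is in fact the maximum idempotent, hence $z_H + \mathrm{SP}(E_H)$ is the smallest ideal of $\mathrm{SP}(E_H)$; by the text's remark (\cite[Lemma 2.5]{GeneRooz}) this smallest ideal is $\cong G(\mathrm{SP}(E_H)) = G(E_H)$, and being $z_H + \mathrm{SP}(E_H) = z_H + (z_H + \mathrm{SP}(E_H))$ it is a group with identity $z_H$; maximality among subgroups of $\mathrm{SP}(E)$ follows because any subgroup $G' \ni z_H$ has its identity $e'$ idempotent with $e' \le z_H$ forcing... (one must check the subgroup sits inside $\mathrm{SP}(E_H)$ for the appropriate $H = \mathrm{supp}$-closure of its identity, then invoke that the smallest ideal is the unique maximal subgroup at that idempotent).

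\textbf{Main obstacle.} I expect the real difficulty to be the order-ideal property in part (3) — specifically, proving that $x + y \in \mathrm{SP}(E_H)$ forces $x \in \mathrm{SP}(E_H)$. The naive reduction argument is delicate because a reduction sequence for $x + y$ need not decompose into independent sequences for $x$ and $y$; one genuinely needs to exploit heredity of $H$ together with confluence (Proposition \ref{Conflupro}) to argue that any vertex outside $H$ appearing in $x$ cannot be ``cancelled'' by interaction with $y$, since firing never moves mass from outside $H$ into $H$ and vice versa in a way that removes an out-of-$H$ vertex. Getting this cleanly — perhaps via an invariant that tracks the $H^c$-support under $\rightarrow_1$ — is the crux; once (3) and (4) are secured, (5)--(7) fall into place with standard semigroup-theoretic bookkeeping.
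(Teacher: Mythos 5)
Your proposal follows essentially the same route as the paper's proof: part (3) is handled there exactly by the ``escape'' invariant you identify (a vertex outside the saturated set $H$ must connect to a cycle disjoint from $H$, and firing can never eliminate all chips from such a cycle, so confluence yields the contradiction), parts (1), (2), (4), (5) match step for step, and part (7) is proved by showing $z_H+\mathrm{SP}(E_H)$ consists of recurrent elements and equals the unit group $Z(z_H+\mathrm{SP}(E))$, which is your smallest-ideal/maximum-idempotent argument. The only cosmetic difference is in (6), where you observe the colimit collapses to the value at the top element $E^0$, while the paper writes out the direct limit explicitly; both are fine.
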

\begin{proof}
(1) Let $H$ and $H'$ be two elements of $\mathcal{H}_E$ with $H\subseteq H'$. Then, $H$ is a saturated hereditary subset of $(E_{H'})^0$. By Proposition \ref{sandsubmon}, there exists an injective monoid homomorphism $\psi_{H, H'}: \text{SP}(E_H)\longrightarrow \text{SP}(E_{H'})$ such that $\psi_{H, H'}(a) = a$ for all $a\in \text{SP}(E_H)$, and so $\rm{SP}$$(E_H)$ may be considered as a submonoid of $\rm{SP}$$(E_{H'})$.

(2) It follows from \cite[Proposition 3.3]{GeneRooz}.
	
(3) Let $H$ be an element in $\mathcal{H}_E$. We then have $S_E\subseteq H$. By items (1) and (2), $\text{SP}(E_H)$ is a submonoid of $\text{SP}(E)$ containing $Z(\text{SP}(E))$. Assume that $x$ any $y$ are elements in $\text{SP}(E)$ with $x\le y$ and $y\in \text{SP}(E_H)$. Then, $y = x +z$ in $\text{SP}(E)$ for some $z\in \text{SP}(E)$. If $y\in Z(\text{SP}(E))$, then $x\in Z(\text{SP}(E))$ and so $x\in \text{SP}(E_H)$, as desired.
	
Considering the case when $y\notin Z(\text{SP}(E))$.
By Proposition \ref{Conflupro}, there exists an element $c\in \mathbb \langle v\mid v\in E^0\rangle$ such that $y \rightarrow c$ and $x+z\rightarrow c$. Since $y\in \text{SP}(E_H)$, $\text{supp}(y)\subseteq E^0_H$, and so $\text{supp}(c)\subseteq E^0_H$. Assume that $x\notin \text{SP}(E)$. Then, there exists a vertex $v\in \text{supp}(x)$ such that $v\notin E^0_H$; in particular, $v\notin S_E$. Since $H$ is saturated, $v$ must connect to a cycle which is not in $E_H$. Hence, any possible transformations of $v$ or its multiple would give a vertex on a cycle which is not in $E_H$ and subsequently any further transformations always contain a vertex on a cycle which is not in $E_H$. This shows that $x + y$ cannot be transformed to $c$, a contradiction, and so $x\in \text{SP}(E)$. Thus $\text{SP}(E_H)$ is an order-ideal of $\text{SP}(E)$ and $\rm{SP}$$(E_H) = \langle H\rangle$.
	
(4) Let $I$ be an order-ideal of $\text{SP}(E)$. Since $0\in I$ and $I$ is an order-ideal, $Z(\text{SP}(E))\subseteq I$. Let $H := I \cap E^0$. By item (2), $S_E\subseteq H$. We claim that $H\in \mathcal{H}_E$. Indeed, let $e\in E^1$ with $v:=s(e)\in H$. We then have $v\in I$ and $\sum_{f\in s^{-1}(v)}r(f)=|s^{-1}(v)|v \in I$, and so $r(f)\in I$ for all $f\in s^{-1}(v)$ (since $I$ is an order-ideal of $\text{SP}(E)$); in particular, $r(e)\in I$. Hence, $H$ is hereditary.
	
Let $v$ be a regular vertex in $E$ with $r(s^{-1}(v))\subseteq I$. Then, $|s^{-1}(v)|v=\sum_{f\in s^{-1}(v)}r(f) \in I$, and so $v\in I$. Therefore, $H$ is saturated, showing the claim.
	
We next prove that $I= \text{SP}(E_H)$. Since $H\subseteq I$ and $I$ is a submonoid of $\text{SP}(E)$, we have $\text{SP}(E_H)\subseteq I$. Let $x$ be a nonzero element in $I$. Since $I$ is an order-ideal of $\text{SP}(E)$, we must have $\text{supp}(x)\subseteq I$, and so $\text{supp}(x)\subseteq H$. This implies that $x\in \text{SP}(E_H)$, that means, $I\subseteq \text{SP}(E_H)$, and so $I= \text{SP}(E_H)$.

(5) Let $\phi: \mathcal{H}_E\longrightarrow \mathcal{L}(\text{SP}(E))$ be the map defined by: $H\longmapsto\phi(H)= \text{SP}(E_H)$, and let $\psi: \mathcal{L}(\text{SP}(E))\longrightarrow\mathcal{H}_E$ be the map defined by: $I\longmapsto \psi(I) = I\cap E^0$. Then, it is obvious that these maps are order-preserving. Also, we have $\psi(\phi(H)) = \psi(\text{SP}(E_H)) = H$ for all $H\in \mathcal{H}_E$ (by item (3)), that means, $\psi\phi = id_{\mathcal{H}_E}$. On the other hand, by item (4), we obtain that $\phi\psi = id_{\mathcal{L}(\text{SP}(E))}$. Therefore, $\phi$ and $\psi$ are order-preserving mutually inverse maps.
	
(6) It is clear that $\mathcal{H}_E$ is directed, since  the join of two elements $H$ and $H'$ is exactly the hereditary saturated closure of $H \cup H'$. 

Let $H$ and $H'$ be two elements of $\mathcal{H}_E$ with $H\subseteq H'$. By item (1), there exists a group homomorphism $\phi_{H, H'}: \text{G}(E_H)\longrightarrow \text{G}(E_{H'})$ such that $\phi_{H, H'}([(a, b)]_0) = [(a, b)]_0$ for all $a, b\in \text{SP}(E_H)$. It obvious that $$\phi_{H, H''} = \phi_{H', H''}\circ\phi_{H, H'}$$ for all $H, H', H''\in \mathcal{H}_E$ with $H\subseteq H'\subseteq H"$.
This shows that $$\varinjlim_{H\in \mathcal{H}_E} \text{G}(E_H) = \bigsqcup_{H\in \mathcal{H}_E}\text{G}(E_H)/\equiv,$$ 	where $\equiv$ is the equivalence relation on $\bigsqcup_{H\in \mathcal{H}_E}\text{G}(E_H)$ defined as follows:  for $[(a, b)]_0\in \text{G}(E_H)$ and $[(a', b')]_0\in \text{G}(E_{H'})$, $[(a, b)]_0 \equiv [(a', b')]_0$ if and only if there exists an element $H''\in \mathcal{H}_E$ such that $H, H' \subseteq H''$ and $\phi_{H, H''}([(a, b)]_0) = \phi_{H', H''}([(a', b')]_0)$ in $\text{G}(E_{H''})$. Then, it is straightforward to see that $G(E) = \varinjlim_{H\in \mathcal{H}_E} G(E_H)$.
	
(7) By Proposition \ref{recur}, we have $G(E_H)\subseteq z_H + \text{SP}(E_H)$. Conversely, let $a$ and $b$ be two elements of $\text{SP}(E_H)$. Since $\text{SP}(E_H)$ is a finite commutative monoid and by \cite[Theorem 2.12]{CGGMS}, $m b$ is an idempotent in $\text{SP}(E_H)$ for some $m\in \mathbb{N}^+$. We then have $mb +x = z_H$ for some $x\in \text{SP}(E_H)$, and so $$z_H + a = mb + x + a = b + (m-1)b +x +a.$$ This shows that $z_H +a$ is recurrent. By Proposition \ref{recur}, $z_H + \text{SP}(E_H)\subseteq G(E_H)$, and so $z_H + \text{SP}(E_H)= G(E_H)$.
	
We claim that $G(E_H) = Z(z_H + \text{SP}(E))$. Indeed, since $G(E_H)$ is a group with the identity $z_H$, $G(E_H) \subseteq Z(z_H + \text{SP}(E))$. Conversely, let $a\in Z(z_H + \text{SP}(E))$. We then have $a + b = z_H$ for some $b\in \text{SP}(E)$. Since $z_H\in \text{SP}(E_H)$ and $\text{SP}(E_H)$ is an order-ideal of $\text{SP}(E)$ (by Item (3)), we have $a\in \text{SP}(E_H)$, and so $a = z_H + a \in z_H + \text{SP}(E_H)= G(E_H)$. This implies that $Z(z_H + \text{SP}(E))\subseteq G(E_H)$, thus finishing the proof.
\end{proof}

As a corollary of Theorem \ref{sandmonstruc}, we obtain the following interesting result.

\begin{cor}\label{sat-her-lat-iso}
Let $E$ and $F$ be two sandpile graphs. If $\rm{SP}$$(E)\cong \rm{SP}$$(F)$ as monoids, then $\mathcal{H}_E\cong \mathcal{H}_F$ as lattices.	
\end{cor}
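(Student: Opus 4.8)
The plan is to deduce this immediately from Theorem \ref{sandmonstruc}(5) together with the elementary fact that the lattice of order-ideals of a commutative monoid is a monoid invariant. By Theorem \ref{sandmonstruc}(5), applied to both $E$ and $F$, there are lattice isomorphisms $\mathcal{H}_E \cong \mathcal{L}(\text{SP}(E))$ and $\mathcal{H}_F \cong \mathcal{L}(\text{SP}(F))$. So it suffices to show that a monoid isomorphism $\theta\colon \text{SP}(E)\to \text{SP}(F)$ induces a lattice isomorphism $\mathcal{L}(\text{SP}(E))\cong \mathcal{L}(\text{SP}(F))$; composing the three isomorphisms then gives $\mathcal{H}_E\cong \mathcal{H}_F$.

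First I would check that $\theta$ preserves the canonical preorder: for $x,y\in \text{SP}(E)$ one has $x\le y$ iff $y=x+m$ for some $m\in \text{SP}(E)$, iff $\theta(y)=\theta(x)+\theta(m)$, iff $\theta(x)\le\theta(y)$ in $\text{SP}(F)$, where the last equivalence uses that $\theta$ is surjective. Hence if $I$ is an order-ideal of $\text{SP}(E)$, then $\theta(I)$ is a submonoid of $\text{SP}(F)$ that is hereditary for $\le$, i.e. an order-ideal; running the same argument with $\theta^{-1}$ shows $\theta^{-1}$ likewise sends order-ideals to order-ideals. Thus $\Theta\colon I\mapsto \theta(I)$ is a bijection $\mathcal{L}(\text{SP}(E))\to \mathcal{L}(\text{SP}(F))$ with inverse $J\mapsto \theta^{-1}(J)$, and both $\Theta$ and its inverse are inclusion-preserving, so $\Theta$ is an isomorphism of the posets $(\mathcal{L}(\text{SP}(E)),\subseteq)$ and $(\mathcal{L}(\text{SP}(F)),\subseteq)$. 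Since an order isomorphism between complete lattices automatically preserves arbitrary joins and meets, $\Theta$ is a lattice isomorphism. One can also see this concretely: $I\vee I'$ is the order-ideal generated by $I+I'$ and $\theta(I+I')=\theta(I)+\theta(I')$, while $I\wedge I'=I\cap I'$ and $\theta$ is bijective.

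I expect no real obstacle here: order-ideals and the join and meet on them are defined purely from the monoid operation, so they transport across any monoid isomorphism, and the corollary is then just the composition of Theorem \ref{sandmonstruc}(5) with this transport. The only mild point of care is the ``both directions'' bookkeeping---using surjectivity of $\theta$ when transporting the preorder, and applying the argument to $\theta^{-1}$ as well---which is routine.
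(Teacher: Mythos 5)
Your proposal is correct and follows exactly the paper's route: the paper likewise notes that a monoid isomorphism $\mathrm{SP}(E)\cong\mathrm{SP}(F)$ yields a lattice isomorphism $\mathcal{L}(\mathrm{SP}(E))\cong\mathcal{L}(\mathrm{SP}(F))$ and then concludes via Theorem \ref{sandmonstruc}(5). You merely spell out the transport-of-structure step that the paper leaves implicit.
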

\begin{proof}
	Assume that $\text{SP}(E)\cong \text{SP}(F)$ as monoids. We then obtain that $\mathcal{L}(\text{SP}(E))\cong \mathcal{L}(\text{SP}(F))$ as lattices.	By Theorem \ref{sandmonstruc} (5), we immediately obtain that $\mathcal{H}_E\cong \mathcal{H}_F$ as lattices, thus finishing the proof.
\end{proof}	

\section{Idempotents and archimedean classes of sandpile monoids} \label{weightedgraphssec3}
In this section, based essentially on Babai and Toumpakari's result \cite[Theorem 4.13]{BT} and Theorem \ref{sandmonstruc}, we describe all idempotents and all order-ideals of the sandpile monoid $\text{SP}(E)$ of a sandpile graph $E$ in terms of the combinatorics of $E$. In particular, we obtain that they are not only isomorphic to each other as lattices, but also isomorphic to the lattice of all nonempty saturated hereditary subsets of $E$ (Theorem \ref{idem-fiter-here-ordideal}). Also,  we prove that a sandpile group $G(E)$ is the direct limit of the Grothendieck groups of  archimedean classes of $\text{SP}(E)$ and obtain that the Grothendieck group of an archimedean class $[x]$ is exactly the sandpile group of the sandpile monoid arising from the support of $x$, as well as show that all maximal subgroups of $\text{SP}(E)$ are exactly the Grothendieck groups of  archimedean classes of $\text{SP}(E)$ (Theorem \ref{arch-theo}).\medskip

Let $(P,\leq)$ be a partially ordered set. We say a nonempty subset $I$ of $P$  is an \emph{ideal} of $P$ if $x\in I$ and $y\leq x$ then $y \in I$. We say a set $F$ is a \emph{filter} of $P$ if $x\in F$ and $x\leq y$ implies $y\in F$. 

Let $E$ be an arbitrary graph. Let $\mathcal C_E$ be the set of all strongly connected cyclic components of $E$, i.e.,  strongly connected components which contain cycles. Consider $\mathcal C_E$ as a partially ordered set with $C\leq C'$ if there is a path connecting $C$ to $C'$. We denote by $\mathcal{F}_E$ the set of all filters of $\mathcal{C}_E$. It is obvious that $(\mathcal{F}_E, \subseteq)$ is a complete lattice such that $F\vee F' = F \cup F'$ and $F\wedge F' = F\cap F'$ for all $F, F'\in \mathcal{F}_E$.

\begin{lemma}\label{fil-hered}
Let $E$ be a sandpile graph, $S_E$ the set of all vertices in $E$ which do not connect to any cycle in $E$, $\mathcal{F}_E$ the complete lattice of all filters of $\mathcal{C}_E$. Then, the corresponding $\phi_E:  \mathcal{F}_E\longrightarrow \mathcal{H}_E$, defined by $\emptyset \longmapsto S_E$ and $\emptyset\neq F \longmapsto \overline{\cup_{C\in F}C^0}$, is a lattice isomorphism, where $C^0$ is the set of all vertices of $C$.
\end{lemma}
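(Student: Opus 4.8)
The plan is to write down an explicit inverse for $\phi_E$ and check that both maps are order-preserving; since $\mathcal F_E$ and $\mathcal H_E$ are complete lattices, mutually inverse order-preserving bijections automatically constitute a lattice isomorphism. Define $\psi_E\colon \mathcal H_E\to\mathcal F_E$ by $\psi_E(H)=\{C\in\mathcal C_E\mid C^0\subseteq H\}$. Two elementary observations underlie everything. First, because $H$ is hereditary and each $C\in\mathcal C_E$ is strongly connected, every strongly connected cyclic component of $E$ is either entirely contained in $H$ or entirely disjoint from $H$ (a vertex of $C^0$ in $H$ reaches all of $C^0$ by paths that, being hereditary, stay in $H$); consequently $\psi_E(H)$ is a filter of $\mathcal C_E$ (if $C^0\subseteq H$ and $C\le C'$, a path witnessing $C\le C'$ lands in $C'^0\cap H$, and then $C'^0\subseteq H$ by the previous sentence), and $\psi_E(S_E)=\emptyset$ since no vertex of $S_E$ lies on a cycle. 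Second, $\overline{\bigcup_{C\in F}C^0}$ is a nonempty saturated hereditary set for $\emptyset\ne F$, so $\phi_E$ lands in $\mathcal H_E$; both $\phi_E$ and $\psi_E$ are visibly order-preserving, using that $S_E$ is the least element of $\mathcal H_E$ to handle the empty filter.

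Next I would prove $\psi_E\circ\phi_E=\mathrm{id}_{\mathcal F_E}$. For $F=\emptyset$ this is the computation $\psi_E(S_E)=\emptyset$ above, and for $F\ne\emptyset$ the inclusion $F\subseteq\psi_E(\phi_E(F))$ is immediate. For the reverse inclusion the point is that $\overline{\bigcup_{C\in F}C^0}$ contains no cyclic vertex outside $\bigcup_{C\in F}C^0$: set $B:=\{v\in E^0\mid v\text{ connects to no } C'\in\mathcal C_E\setminus F\}$; using that $F$ is a filter one checks directly that $B$ is hereditary and saturated and contains $\bigcup_{C\in F}C^0$, hence $\overline{\bigcup_{C\in F}C^0}\subseteq B$, and the cyclic vertices of $B$ are exactly those of $\bigcup_{C\in F}C^0$. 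Thus if $C^0\subseteq\phi_E(F)$, any $v\in C^0$ lies in $\bigcup_{C'\in F}C'^0$, forcing $C\in F$.

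Then I would prove $\phi_E\circ\psi_E=\mathrm{id}_{\mathcal H_E}$. For $H=S_E$ it is again the computation $\phi_E(\emptyset)=S_E$. For $H\ne S_E$, first note $\psi_E(H)\ne\emptyset$: any $v\in H\setminus S_E$ connects to a cycle by a path which, by hereditariness, stays in $H$, producing a $C\in\mathcal C_E$ with $C^0\subseteq H$. The inclusion $\phi_E(\psi_E(H))\subseteq H$ holds because $\bigcup_{C\in\psi_E(H)}C^0\subseteq H$ and $H$ is saturated hereditary. The reverse inclusion $H\subseteq W:=\overline{\bigcup_{C\in\psi_E(H)}C^0}$ is the crux, and I would argue by contradiction: if $Y:=H\setminus W\ne\emptyset$, then $W$ is a nonempty saturated hereditary set so $S_E\subseteq W$, whence $Y$ contains no sink and, $E$ being finite, consists of regular vertices; saturation of $W$ then forces each $v\in Y$ to emit an edge to a vertex of $Y$; as $Y$ is finite this yields a cycle inside $Y$, whose vertices lie in some $C\in\mathcal C_E$ with $C^0\cap H\ne\emptyset$, hence $C^0\subseteq H$, hence $C\in\psi_E(H)$ and $C^0\subseteq W$ — contradicting $C^0\cap Y\ne\emptyset$.

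Finally, $\phi_E$ and $\psi_E$ being mutually inverse order-preserving maps between the posets $\mathcal F_E$ and $\mathcal H_E$ are isomorphisms of posets, hence of complete lattices; if desired one can also verify preservation of joins directly from $\overline{\bigcup_{C\in F\cup F'}C^0}=\overline{\,\overline{\bigcup_{C\in F}C^0}\cup\overline{\bigcup_{C\in F'}C^0}\,}$ together with $H\vee H'=\overline{H\cup H'}$. The main obstacle is the final contradiction argument of the previous paragraph, i.e. showing that the hereditary–saturated closure of the cyclic components sitting inside $H$ already recovers all of $H$; everything else is routine manipulation of the hereditary and saturated conditions.
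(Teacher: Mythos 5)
Your proof is correct and follows essentially the same route as the paper: in both cases the inverse of $\phi_E$ is the map sending $H$ to the set of strongly connected cyclic components contained in $H$ (equivalently, the cyclic components of the restriction graph $E_H$). The only difference is one of completeness — the paper merely asserts both that $\overline{\cup_{C\in F}C^0}$ recovers $H$ when $F$ is the set of cyclic components of $E_H$, and that the cyclic components of $E_{\phi_E(F)}$ are exactly $F$, whereas you supply full arguments for these two facts (via your auxiliary sets $B$ and $Y$), which are indeed the only points requiring real work.
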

\begin{proof}
It is obvious that $\phi_E$ is a lattice homomorphism. Let $H$ be a nonempty saturated hereditary subsets of $E^0$. We denote by $F$ the set of 	all strongly connected cyclic components of the restriction graph $E_H$ of $E$ onto $H$. Assume that $C_1, C_2\in \mathcal{C}_E$ with $C_1\le C_2$ and $C_1\in F$. Then, we have that $C_1$ is a strongly connected cyclic components of $E_H$. Since $C_1$ connects to $C_2$ and $H$ is hereditary, we obtain that $C_2$ is also a strongly connected cyclic components of $E_H$, that means, $C_2\in F$. Hence, $F$ is a filter of $\mathcal{C}_E$, and $\phi_E(F) = H$. This implies that $\phi_E$ is surjective.
	
Let $F_1$ and $F_2$ be elements of $\mathcal{F}_E$ with $\phi_E(F_1) = \phi_E(F_2)$. We then have $H_1 := \overline{\cup_{C\in F_1}C^0}=\overline{\cup_{C\in F_2}C^0} =: H_2$, and so the graphs $E_{H_1}$ and $E_{H_2}$ are the same. Since $H_i$ is saturated hereditary, the set of all strongly connected cyclic components of $E_{H_i}$ is exactly $F_i$. Therefore, $F_1 = F_2$, and so $\phi_E$ is injective, thus finishing the proof.
\end{proof}	

For clarification, we illustrate Lemma \ref{fil-hered} by presenting the following example.

\begin{example}
	Let $E$ be the following pictured graph
	$$\xymatrix{\bullet^{w}\ar[r]^{e}&
		\bullet^{v} \ar@(ul,ur)^f\ar[r]^g& \bullet^{s}&\bullet^{u}\ar[l]_{h}}$$
	Then we have $S_E = \{s, u\}$ and $\mathcal{H}_E =\{S_E, E^0\}.$ Also, we have $\mathcal{F}_E = \{\emptyset, \{f\}\}$. Therefore, the lattice isomorphism introduced in Lemma \ref{fil-hered} is defined by: $\phi_E(\emptyset) = S_E$	and  $\phi_E(\{f\}) = E^0$.
\end{example}

Let $E$ be a sandpile graph and $S_E$ the set of all vertices in $E$ which do not connect to any cycle in $E$. Let $\mathcal C_E$ be the set of all strongly connected cyclic components of $E$. Let $\mathcal H^s_E= \{S_E, H_C\mid C\in \mathcal{C}_E\}$, where $H_C=\overline{C^0}$  is the hereditary and saturated closure of $E^0$ containing $C^0$.
We note that $(\mathcal H^s_E, \subseteq)$ is a partially ordered set with the smallest element $S_E$. We denote by $\text{Ideal}(\mathcal H^s_E)$ the set of all ideals of $\mathcal H^s_E$.

\begin{lemma}\label{fil-ideal}
Let $E$ be sandpile graph and $S_E$ the set of all vertices in $E$ which do not connect to any cycle in $E$. Then, the corresponding $\psi_E:  \mathcal{F}_E\longrightarrow \rm{Ideal}(\mathcal H^s_E)$, defined by $\emptyset \longmapsto \{S_E\}$ and $\emptyset\neq F \longmapsto H_F := \{S_E, H_C\mid C\in F\}$, is a lattice isomorphism.	
\end{lemma}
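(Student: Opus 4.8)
The plan is to deduce the lemma from Lemma~\ref{fil-hered} by a purely order-theoretic argument, so that the behaviour of the hereditary--saturated closure never needs to be re-examined. For $C\in\mathcal C_E$, let $F_C:=\{D\in\mathcal C_E\mid C\le D\}$ be the principal filter of $\mathcal C_E$ generated by $C$. I will use the elementary facts that each $F_C$ lies in $\mathcal F_E$, that $F_C\subseteq F_{C'}$ if and only if $C'\le C$, and that every $F\in\mathcal F_E$ equals $\bigcup_{C\in F}F_C$, with $F_C\subseteq F$ if and only if $C\in F$. Set $P:=\{\emptyset\}\cup\{F_C\mid C\in\mathcal C_E\}$, regarded as a sub-poset of $(\mathcal F_E,\subseteq)$; it has $\emptyset$ as least element. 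The goal is to realize $\psi_E$ as a composite $\psi_E=(\phi_E)_\ast\circ\mu$ of lattice isomorphisms $\mu\colon\mathcal F_E\to\operatorname{Ideal}(P)$ and $(\phi_E)_\ast\colon\operatorname{Ideal}(P)\to\operatorname{Ideal}(\mathcal H^s_E)$, where $\phi_E$ is the isomorphism of Lemma~\ref{fil-hered}.

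First I would verify $\phi_E(F_C)=H_C$ for every $C\in\mathcal C_E$: since each $D\ge C$ is reachable from $C$, we have $\bigcup_{D\ge C}D^0\subseteq\overline{C^0}$, whence $\phi_E(F_C)=\overline{\bigcup_{D\ge C}D^0}\subseteq\overline{C^0}=H_C$, while the reverse inclusion is immediate from $C^0\subseteq\bigcup_{D\ge C}D^0$. Together with $\phi_E(\emptyset)=S_E$ this shows $\phi_E(P)=\mathcal H^s_E$. As $\phi_E$ is a lattice isomorphism, hence an order isomorphism, its restriction $\phi_E|_P\colon P\to\mathcal H^s_E$ is an order isomorphism; and any order isomorphism $g\colon P\to P'$ of posets induces a lattice isomorphism $\operatorname{Ideal}(P)\to\operatorname{Ideal}(P')$, $J\mapsto g(J)$, since it carries down-sets to down-sets bijectively and an order isomorphism between lattices automatically preserves joins and meets. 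Hence $(\phi_E)_\ast\colon\operatorname{Ideal}(P)\to\operatorname{Ideal}(\mathcal H^s_E)$, $J\mapsto\phi_E(J)$, is a lattice isomorphism.

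Next I would introduce $\mu\colon\mathcal F_E\to\operatorname{Ideal}(P)$ by $\mu(F):=\{Q\in P\mid Q\subseteq F\}$ and check it is a lattice isomorphism. It takes values in $\operatorname{Ideal}(P)$ because $\emptyset\in\mu(F)$ and $\mu(F)$ is downward closed in $P$; it is injective because $\mu(F)\setminus\{\emptyset\}=\{F_C\mid C\in F\}$, from which $F=\bigcup_{C\in F}F_C$ is recovered; it is surjective because, for an ideal $J$ of $P$, the set $F_J:=\bigcup_{Q\in J}Q$ is a filter with $\mu(F_J)=J$, the non-trivial inclusion $\mu(F_J)\subseteq J$ following from $F_C\subseteq F_{C'}\Leftrightarrow C'\le C$ and the downward closedness of $J$; and it preserves $\cup$ and $\cap$, the only point worth a remark being $\mu(F_1\cup F_2)=\mu(F_1)\cup\mu(F_2)$, which reduces to the equivalence $F_C\subseteq F_1\cup F_2\Leftrightarrow(F_C\subseteq F_1\text{ or }F_C\subseteq F_2)$. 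Finally $(\phi_E)_\ast(\mu(F))=\phi_E(\mu(F))=\{\phi_E(\emptyset)\}\cup\{\phi_E(F_C)\mid C\in F\}=\{S_E\}\cup\{H_C\mid C\in F\}$, which is $H_F$ when $F\neq\emptyset$ and is $\{S_E\}$ when $F=\emptyset$; thus $\psi_E=(\phi_E)_\ast\circ\mu$ is a composite of lattice isomorphisms, proving the claim.

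The step I expect to be the main obstacle is the combinatorial bookkeeping gathered in the first paragraph: that $C\mapsto F_C$ is an order-reversing embedding of $\mathcal C_E$ with image $P\setminus\{\emptyset\}$, and that every filter of $\mathcal C_E$ is the union of the principal filters it contains. These are exactly the properties that make $\mu$ bijective and operation-preserving, and once they are in hand everything downstream of Lemma~\ref{fil-hered} is formal. If one preferred to avoid $\phi_E$ altogether, the same information can be extracted from the single non-formal input that, for cyclic components $C$ and $D$, one has $D^0\subseteq\overline{C^0}$ if and only if there is a path from $C$ to $D$; the forward direction holds because a vertex lying on a cycle can never be adjoined to a hereditary set by a saturation step --- it emits an edge into its own cycle --- so the first vertex of $D^0$ to enter $\overline{C^0}$ must do so through the hereditary-closure step and is therefore reachable from $C^0$.
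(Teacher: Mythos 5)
Your proof is correct, and it takes a genuinely different route from the paper's. The paper's own proof rests on one combinatorial claim: for cyclic components $C,C'$ one has $C\le C'$ if and only if $H_{C'}\subseteq H_C$ (the inclusion displayed in the paper's claim is reversed by a typo, but this order-reversing version is what its argument establishes and what is needed for $\psi_E$ to send filters to ideals); the nontrivial direction is proved by hand via the stepwise construction $H_C=\bigcup_{n\ge 0}X_n$ of the saturated closure and the observation that a vertex lying on a cycle is never adjoined at a saturation step. You instead factor $\psi_E=(\phi_E)_\ast\circ\mu$ through the ideal lattice of the sub-poset $P=\{\emptyset\}\cup\{F_C\mid C\in\mathcal C_E\}$ of principal filters, and recover the same combinatorial equivalence purely formally: $H_C\subseteq H_{C'}\Longleftrightarrow\phi_E(F_C)\subseteq\phi_E(F_{C'})\Longleftrightarrow F_C\subseteq F_{C'}\Longleftrightarrow C'\le C$, using only that $\phi_E$ is a lattice (hence order) isomorphism, i.e.\ Lemma \ref{fil-hered}, together with your computation $\phi_E(F_C)=H_C$. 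What your approach buys is that no property of the hereditary--saturated closure needs to be re-examined --- all of that content is absorbed into the injectivity of $\phi_E$, already established --- at the cost of some extra order-theoretic bookkeeping about principal filters and the intermediate lattice $\operatorname{Ideal}(P)$; your closing remark that a vertex on a cycle cannot enter a hereditary set through a saturation step is, in substance, exactly the direct argument the paper runs.
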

\begin{proof} We first claim that  $C \leq  C'$ if and only if $H_C \subseteq H_{C'}$ for all $C, C'\in \mathcal{C}_E$. Indeed, if $C \leq C'$, then there is a path connecting $C$ to $C'$. Thus the hereditary and saturated subset generated by $C$ contains all the vertices in $C'$. Therefore $H_C \subseteq H_{C'}$. 
	
Suppose $H_C \subseteq H_{C'}$ but there is no path from $C$ to $C'$. Denote by $X_0=T(C)$ the hereditary closure of $C$. Clearly $C' \cap X_0=\varnothing$.  Recall the construction of hereditary and saturated set $H_C=\bigcup_{n\geq 0} X_n$ from ~\cite[Lemma 2.0.7]{TheBook}, where $$X_{i+1} = S(X_i) := \{v\in E^0_{\reg}\mid \{r(e) \mid e \in s^{-1}(v)\}\subseteq X_i\} \cup X_i.$$
Let $u\in (C')^0$. Since  $\overline{(C')^0}=H_{C'} \subseteq H_C$, there is $k\geq 0 $ such that $u \not \in X_k$ but $u \in X_{k+1}$. That means, all the edges emitting from $u$ land in $X_k$. But $X_k$ is hereditary and $u$ is on a cycle (since $C'$ is a strongly connected cyclic component). This gives that $u\in X_k$, a contradiction. Thus there is a path connecting $C$ to $C'$, that means, $C\le C'$, showing the claim.	The claim shows that $\psi_E$ is a lattice isomorphism, thus finishing the proof.
\end{proof}	

For clarification, we illustrate Lemma \ref{fil-ideal} by presenting the following example.

\begin{example}
Let $E$ be the following pictured graph
	$$\xymatrix{\bullet^{x}\ar[r]^{f}&\bullet^{w}\ar@(ul,ur)^{c_2}\ar[r]^{e}&
		\bullet^{v} \ar@(ul,ur)^{c_1}\ar[r]^g& \bullet^{s}&\bullet^{u}\ar[l]_{h}}$$
	Then we have $S_E = \{s, u\}$ and $\mathcal{F}_E = \{\emptyset, \{c_1\}, \{c_1, c_2\}\}$. Also, we have  $H_{c_1}=\overline{c^0_1} = \overline{\{v\}} = \{v, s, u\}$ and $H_{c_2} =\overline{c^0_2} = \overline{\{w\}} = E^0$, and so $\mathcal{H}^s_E =\{S_E, H_{c_1}, H_{c_2}\}.$ Therefore, the lattice isomorphism introduced in Lemma \ref{fil-ideal} is defined by: $\psi_E(\emptyset) = \{S_E\}$,  $\psi_E(\{c_1\}) = \{S_E, H_{c_1}\}$ and $\psi_E(\{c_1, c_2\}) = \{S_E, H_{c_1}, H_{c_2}\}$.
\end{example}

In \cite{BT}, Babai and Toumpakari provided us with a method to read the idempotent structure of the sandpile monoid $\text{SP}(E)$ of a sandpile graph $E$ via the cycle structure of $E$. 

\begin{thm}[{\cite[Theorem 4.13]{BT}}]\label{BT-thm}
For every sandpile graph $E$, the correspondence $\delta_E: \rm{Idem}(SP(E))\longrightarrow \mathcal{F}_E$, defined by $\delta_E(0) = \emptyset$ and $\delta_E(x)$ is the set of all  strongly connected cyclic components of $E_H$, where $H = \overline{\rm{supp}(x)}$, for all $0\neq x\in \rm{Idem}(SP(E))$, is a lattice isomorphism.
\end{thm}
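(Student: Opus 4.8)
The plan is to exhibit $\delta_E$ as a composite of three lattice isomorphisms, two of which are already available. Write
$$\delta_E=\phi_E^{-1}\circ\psi\circ\iota,$$
where $\iota\colon\text{Idem}(\text{SP}(E))\to\mathcal{L}(\text{SP}(E))$ sends an idempotent $x$ to the order-ideal $\langle x\rangle$ it generates, $\psi\colon\mathcal{L}(\text{SP}(E))\to\mathcal{H}_E$, $I\mapsto I\cap E^0$, is the lattice isomorphism of Theorem~\ref{sandmonstruc}(5), and $\phi_E^{-1}\colon\mathcal{H}_E\to\mathcal{F}_E$ is the inverse of the isomorphism of Lemma~\ref{fil-hered}, which by construction sends $H\in\mathcal{H}_E$ to the set of strongly connected cyclic components of $E_H$. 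Since $\psi$ and $\phi_E^{-1}$ are already known to be lattice isomorphisms, the theorem will reduce to showing that $\iota$ is a lattice isomorphism and that the displayed factorization holds.

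For the first point, $x\mapsto\langle x\rangle$ is order-preserving, and it reflects order because $\langle x\rangle\subseteq\langle x'\rangle$ gives $x\in\langle x'\rangle$, that is $x\le x'$; hence it is an order embedding of the finite lattice $\text{Idem}(\text{SP}(E))$ into the finite lattice $\mathcal{L}(\text{SP}(E))$, in particular injective. (That $\langle x\rangle=\langle x'\rangle$ forces $x=x'$ for idempotents is seen directly: then $x\le x'\le x$, so $x+x'=x$ and $x+x'=x'$ using that both are idempotent, whence $x=x'$.) For surjectivity, given an order-ideal $I$ put $\sigma=\sum_{m\in I}m\in I$; since $\text{SP}(E)$ is finite, some multiple $x=k\sigma$ is idempotent, and $\langle x\rangle=I$: indeed $x\in I$ and $I$ is an order-ideal, so $\langle x\rangle\subseteq I$, while every $m\in I$ satisfies $m\le\sigma\le x$, so $I\subseteq\langle x\rangle$. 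An order isomorphism between finite lattices is a lattice isomorphism, so $\iota$ is one.

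It remains to check the factorization, which amounts to the identities $\langle x\rangle\cap E^0=\overline{\text{supp}(x)}$ for every nonzero idempotent $x$, and $\langle 0\rangle\cap E^0=Z(\text{SP}(E))\cap E^0=S_E=\phi_E(\emptyset)$ (the latter by Theorem~\ref{sandmonstruc}(2), consistent with the separate stipulation $\delta_E(0)=\emptyset$). One inclusion is immediate: any vertex occurring in a representative of $x$ is $\le x$, and $\langle x\rangle\cap E^0$ is saturated hereditary by Theorem~\ref{sandmonstruc}(4), so $\overline{\text{supp}(x)}\subseteq\langle x\rangle\cap E^0$. The reverse inclusion rests on the fact that, for a nonzero element of $\text{SP}(E)$, the saturated hereditary set $\overline{\text{supp}(a)}$ is the same for every representative $a$ (so that $\overline{\text{supp}(x)}$ is well defined); granting this, $v\le x$ means $v$ appears in some representative of $x$, whence $v\in\overline{\text{supp}(x)}$, giving $\langle x\rangle\cap E^0\subseteq\overline{\text{supp}(x)}$. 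I would prove representative-independence via Proposition~\ref{Conflupro} together with a one-step analysis of \eqref{hfgtrgt655}: firing a regular vertex $v_j$ only introduces vertices of $r(s^{-1}(v_j))$, which lie in $\overline{\text{supp}(a)}$ by hereditariness, so the closure cannot grow; and it cannot shrink either, since if $v_j$ leaves the support then all of $r(s^{-1}(v_j))$ is in the new support, forcing $v_j$ back in by saturation; firing the sink $s$ only deletes $s$, which belongs to the closure of any nonempty support because $E$ is a sandpile graph. This representative-independence of $\overline{\text{supp}}$ is the one genuinely technical point; once it is in place, $\delta_E=\phi_E^{-1}\circ\psi\circ\iota$ is a composite of lattice isomorphisms, which finishes the proof.
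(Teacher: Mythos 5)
Your argument is correct, but the comparison here is an unusual one: the paper does not prove this statement at all. It is imported verbatim as a citation of Babai--Toumpakari's Theorem 4.13 in \cite{BT} (a preprint), and the surrounding results (Theorem~\ref{idem-fiter-here-ordideal}, Corollary~\ref{idem-sathere-rem}) are built \emph{on top of} it. What you have produced is a genuine internal derivation of the cited result, and it is non-circular: Theorem~\ref{sandmonstruc} and Lemma~\ref{fil-hered} are proved in the paper without any appeal to Theorem~\ref{BT-thm}, and Proposition~\ref{Conflupro} is an external confluence lemma. Your factorization $\delta_E=\phi_E^{-1}\circ\psi\circ\iota$ checks out in all its parts: $\iota$ is surjective because $x=k\sigma$ with $\sigma=\sum_{m\in I}m$ is an idempotent generating $I$, injective by the standard ``two comparable idempotents coincide'' argument (the same computation the paper uses in Theorem~\ref{arch-theo}(1)), and an order isomorphism of finite lattices is a lattice isomorphism; the identity $\langle x\rangle\cap E^0=\overline{\mathrm{supp}(x)}$ is correctly reduced to representative-independence of $\overline{\mathrm{supp}}$; and $\phi_E^{-1}(H)$ is indeed the set of strongly connected cyclic components of $E_H$ by the surjectivity half of the proof of Lemma~\ref{fil-hered}. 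In effect you first prove that $x\mapsto\overline{\mathrm{supp}(x)}$ is a lattice isomorphism $\mathrm{Idem}(\mathrm{SP}(E))\to\mathcal{H}_E$ --- i.e., you establish Corollary~\ref{idem-sathere-rem} directly --- and then recover $\delta_E$ by composing with $\phi_E^{-1}$, which is the reverse of the paper's logical order. What your route buys is self-containedness: the dependence on the unpublished \cite{BT} is removed, at the cost of leaning on Theorem~\ref{sandmonstruc}(4)--(5), which the original \cite{BT} proof presumably does not need.

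The one point you rightly single out as the technical crux --- that $\overline{\mathrm{supp}(a)}$ is the same for every representative $a$ of a nonzero class, so that $H=\overline{\mathrm{supp}(x)}$ is well defined --- is handled correctly by your one-step analysis of \eqref{hfgtrgt655} combined with Proposition~\ref{Conflupro}: firing a regular vertex $v_j$ only adds vertices of $r(s^{-1}(v_j))$, which already lie in the closure by hereditariness, and if $v_j$ drops out of the support then saturation puts it back into the closure since all of $r(s^{-1}(v_j))$ survives; firing $s$ is harmless because $s$ lies in the closure of any nonempty support. The only detail worth adding is the corner case of the sink-firing when $\mathrm{supp}(a)=\{s\}$: there the new support is empty and the closure argument breaks down, but then $a\sim 0$, so this cannot occur along a reduction chain of a representative of a \emph{nonzero} class --- exactly the hypothesis under which you state the claim. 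With that remark made explicit, the proof is complete.
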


We are now in a position to establish the first main result of this section describing all idempotents and all order-ideals of $\text{SP}(E)$ in terms of the combinatorics of $E$, which yields that they are isomorphic to each other as lattices.

\begin{thm}\label{idem-fiter-here-ordideal}
For every sandpile graph $E$, there is a lattice isomorphism between the following lattices: 
	
$(1)$ The lattice $\rm{Idem}($$\rm{SP}$$(E))$ of all idempotents of $\rm{SP}$$(E)$;
	
$(2)$ The lattice $\mathcal{F}_E$ of all filters of $\mathcal{C}_E$;
	
$(3)$ The lattice $\mathcal{H}_E$ of all nonempty saturated hereditary subsets of $E^0$;
	
$(4)$ The lattice $\rm{Ideal}(\mathcal H^s_E)$ of all ideals of $\mathcal{H}^s_E$;
	
$(5)$ The lattice $\mathcal{L}(\rm{SP}$$(E))$ of all order-ideals of $\rm{SP}$$(E)$.
	
\end{thm}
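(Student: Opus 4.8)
The plan is to observe that all four isomorphisms required are, in effect, already in hand, so that the proof is just the assembly of these pieces with $\mathcal{F}_E$ playing the role of a common hub. Precisely: Theorem \ref{BT-thm} gives the lattice isomorphism $\delta_E\colon \text{Idem}(\text{SP}(E))\to \mathcal{F}_E$, so $(1)\cong(2)$; Lemma \ref{fil-hered} gives the lattice isomorphism $\phi_E\colon \mathcal{F}_E\to \mathcal{H}_E$, so $(2)\cong(3)$; Lemma \ref{fil-ideal} gives the lattice isomorphism $\psi_E\colon \mathcal{F}_E\to \text{Ideal}(\mathcal{H}^s_E)$, so $(2)\cong(4)$; and Theorem \ref{sandmonstruc}(5) gives the lattice isomorphism $\mathcal{H}_E\to \mathcal{L}(\text{SP}(E))$, $H\mapsto \text{SP}(E_H)$, so $(3)\cong(5)$.

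First I would fix $\mathcal{F}_E$ as the reference lattice and record the isomorphisms $\delta_E$, $\phi_E$, $\psi_E$ out of it, together with the composite of $\phi_E$ with the map of Theorem \ref{sandmonstruc}(5). Since the composition of lattice isomorphisms is again a lattice isomorphism (an order bijection whose inverse is order preserving is closed under composition), each of $(1)$, $(3)$, $(4)$, $(5)$ is isomorphic to $(2)$, hence all five are pairwise isomorphic. For concreteness one can unwind the composites to exhibit the resulting maps explicitly: $\text{Idem}(\text{SP}(E))\to \mathcal{H}_E$ sends $0\mapsto S_E$ and $0\neq x\mapsto \overline{\text{supp}(x)}$ (this is $\phi_E\circ\delta_E$), and, composing further with Theorem \ref{sandmonstruc}(3),(5), the map $\text{Idem}(\text{SP}(E))\to \mathcal{L}(\text{SP}(E))$ sends $x\mapsto \langle\text{supp}(x)\rangle=\text{SP}\big(E_{\overline{\text{supp}(x)}}\big)$.

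There is no substantive obstacle, since every ingredient is already available; the only care needed is bookkeeping — checking that the maps in Theorem \ref{BT-thm}, Lemmas \ref{fil-hered} and \ref{fil-ideal}, and Theorem \ref{sandmonstruc}(5) are composed in mutually compatible directions and that each is a genuine lattice isomorphism, which is exactly what those statements assert. I would close by remarking that, in particular, all five lattices are finite and complete, so the descriptions of joins and meets recorded earlier for $\mathcal{F}_E$, $\mathcal{H}_E$, $\text{Idem}(\text{SP}(E))$ and $\mathcal{L}(\text{SP}(E))$ correspond to one another under these isomorphisms.
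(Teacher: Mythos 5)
Your proposal is correct and follows essentially the same route as the paper: the published proof likewise assembles the isomorphisms from Theorem \ref{BT-thm}, Lemma \ref{fil-hered}, Lemma \ref{fil-ideal} and Theorem \ref{sandmonstruc}(5) through $\mathcal{F}_E$ and $\mathcal{H}_E$ as hubs. Your explicit composite $0\neq x\mapsto \overline{\mathrm{supp}(x)}$ is exactly what the paper records afterwards as Corollary \ref{idem-sathere-rem}.
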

\begin{proof}
By Theorem \ref{BT-thm}, $\rm{Idem}($$\rm{SP}$$(E))\cong \mathcal{F}_E$ as lattices via the isomorphism $\delta_E$. By Lemma \ref{fil-hered}, $\mathcal{F}_E\cong \mathcal{H}_E$ as lattices via the isomorphism $\phi_E$. By Lemma \ref{fil-ideal}, $\mathcal{F}_E\cong \rm{Ideal}(\mathcal H^s_E)$ as lattices via the isomorphism $\psi_E$. By Theorem \ref{sandmonstruc} (5), we obtain that $\mathcal{L}(\text{SP}(E))\cong \mathcal{H}_E$ as lattices. From these observations, we immediately obtain the theorem, thus finishing the proof.
\end{proof}

We may construct an explicit isomorphism between the lattice of all idempotents of $\text{SP}(E)$ and the lattice of all nonempty saturated hereditary subsets of $E$.

\begin{cor}\label{idem-sathere-rem}
Let $E$ be a sandpile graph and $S_E$ the set of all vertices in $E$ which do not connect to any cycle in $E$. Then, the map $\varsigma_E: \rm{Idem}(SP(E))\longrightarrow \mathcal{H}_E$, defined by $0\longmapsto S_E$ and $0\neq x \longmapsto \overline{\rm{supp}(x)}$, is a lattice isomorphism.	
\end{cor}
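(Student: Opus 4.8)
The plan is to obtain $\varsigma_E$ by composing the lattice isomorphisms already assembled in Theorem \ref{idem-fiter-here-ordideal}, and then to verify that the composite is literally the map described in the statement. Concretely, I would start from $\delta_E\colon \mathrm{Idem}(\mathrm{SP}(E))\longrightarrow \mathcal F_E$ of Theorem \ref{BT-thm} and follow it with $\phi_E\colon \mathcal F_E\longrightarrow \mathcal H_E$ of Lemma \ref{fil-hered}. Both are lattice isomorphisms, so the composite $\phi_E\circ\delta_E$ is a lattice isomorphism; it therefore suffices to identify this composite with $\varsigma_E$.

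The verification splits into two cases. For the zero idempotent, $\delta_E(0)=\emptyset$ and $\phi_E(\emptyset)=S_E$, so $(\phi_E\circ\delta_E)(0)=S_E=\varsigma_E(0)$, as required. For a nonzero idempotent $x$, write $H=\overline{\mathrm{supp}(x)}$. By Theorem \ref{BT-thm}, $\delta_E(x)$ is the set $F$ of all strongly connected cyclic components of $E_H$. Applying $\phi_E$ gives $\phi_E(F)=\overline{\bigcup_{C\in F}C^0}$, and the task is to show this equals $\overline{\mathrm{supp}(x)}=H$. The inclusion $\overline{\bigcup_{C\in F}C^0}\subseteq H$ is immediate, since each such $C$ lies inside $E_H$, so $C^0\subseteq H$, and $H$ is already saturated hereditary. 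For the reverse inclusion one needs that every vertex of $H$ eventually "feeds into" the union of the cyclic components of $E_H$; this is exactly the content of the argument used in Lemma \ref{fil-hered} (that $\phi_E$ is well-defined and surjective, i.e. that a saturated hereditary set is recovered as the saturated hereditary closure of the vertices on its cyclic components), applied to the sandpile graph $E_H$. Indeed, in a sandpile graph every vertex connects to the sink, and the only way a saturated set can "stop growing" is at vertices lying on cycles, so $H$ is the saturated hereditary closure of $\bigcup_{C\in F}C^0$. Hence $(\phi_E\circ\delta_E)(x)=\overline{\mathrm{supp}(x)}=\varsigma_E(x)$.

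Putting the two cases together, $\varsigma_E=\phi_E\circ\delta_E$ is a composition of lattice isomorphisms, hence itself a lattice isomorphism, which finishes the proof. The only genuinely non-formal point is the set equality $\overline{\bigcup_{C\in F}C^0}=\overline{\mathrm{supp}(x)}$ in the nonzero case — but this is essentially a restatement of the bijectivity of $\phi_E$ established in Lemma \ref{fil-hered}, so I expect it to require no new idea beyond quoting that lemma (together with Remark \ref{sathererem}, which guarantees $E_H$ is again a sandpile graph so that the lemma applies to it). Everything else is bookkeeping with the already-proved isomorphisms.
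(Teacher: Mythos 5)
Your proposal is correct and is essentially the paper's own proof: the paper simply observes that $\varsigma_E=\phi_E\circ\delta_E$ and lets the corollary follow from Theorem \ref{idem-fiter-here-ordideal}, and your case-by-case verification (in particular the identity $\overline{\bigcup_{C\in F}C^0}=H$, which is exactly the surjectivity computation in Lemma \ref{fil-hered} applied to $E_H$) just makes that one-line argument explicit.
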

\begin{proof}
It immediately follows from the proof of Theorem \ref{idem-fiter-here-ordideal} that $\varsigma_E = \phi_E\delta_E$.
\end{proof}	



For any commutative monoid $(M, +)$, its canonical preoder induces the following equivalence relation on $M$, the now-so-called {\it archimedean equivalence relation}.

\begin{fact}\label{Archorder}
	For every commutative monoid $M$, the  binary relation, defined by 
	\begin{center}
		$x\asymp y$ if and only if $x\le my$ and $y\le nx$ for some $m, n\in \mathbb{N}$, 
	\end{center}
	is an equivalence relation on $M$.	
\end{fact}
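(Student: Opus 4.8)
The plan is to verify directly that the relation $\asymp$ is reflexive, symmetric and transitive on $M$, using only the fact that the canonical preorder $\le$ is a preorder (reflexive and transitive) and that $M$ is a commutative monoid. Reflexivity is immediate: $x \le 1\cdot x = x$, so taking $m = n = 1$ gives $x \asymp x$. Symmetry is built into the definition, since the two conditions $x \le my$ and $y \le nx$ are simply swapped when one interchanges the roles of $x$ and $y$.

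The only step requiring an argument is transitivity, and this is where the mild care is needed. Suppose $x \asymp y$ and $y \asymp z$, so that $x \le my$, $y \le nx$, $y \le pz$ and $z \le qy$ for suitable $m, n, p, q \in \mathbb{N}$. First I would record the elementary monotonicity property of the canonical preorder: if $a \le b$ then $ka \le kb$ for every $k \in \mathbb{N}$ (write $b = a + c$ and multiply through), and $\le$ is transitive. Then from $x \le my$ and $y \le pz$ we get $my \le m(pz) = (mp)z$, hence $x \le (mp)z$ by transitivity; symmetrically, from $z \le qy$ and $y \le nx$ we get $z \le (qn)x$. Taking the constants $mp$ and $qn$ in $\mathbb{N}$ then yields $x \asymp z$.

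I expect no genuine obstacle here; the one point to be attentive to is that the constants $m,n,\dots$ are allowed to be zero (the statement quantifies over $\mathbb{N}$, not $\mathbb{N}^+$), so one should note that the relevant inequalities such as $x \le 0 \cdot y = 0$ are still legitimate instances of $\le$ when they occur, and that the products $mp$, $qn$ remain in $\mathbb{N}$. Consequently $\asymp$ is an equivalence relation on $M$, and the equivalence classes — the archimedean classes — are well defined, which is exactly what is needed for the subsequent discussion of $\mathrm{SP}(E)$.
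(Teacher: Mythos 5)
Your proof is correct and follows essentially the same route as the paper: both verify reflexivity and symmetry directly and establish transitivity by composing the inequalities $x\le my$, $y\le kz$ into $x\le (mk)z$ (and symmetrically). The only difference is that you make explicit the monotonicity step $a\le b \Rightarrow ka\le kb$, which the paper leaves implicit.
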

\begin{proof}
It is obvious that $\asymp $ is reflexive and symmetric.
	
Let $x, y, z$ be elements of $M$ such that $x\asymp y$ and $y\asymp z$. Then $x\le m y$, $y\le n x$, $y\le k z$ and $z\le l y$ for some $m, n, k, l\in \mathbb{N}$, and so $x\le (mk)z$ and $z\le (ln)x$, that means, $x\asymp z$. This implies that  $\asymp$ is transitive, and so $\asymp$ is an equivalence relation on $M$. 	
\end{proof}

For any commutative monoid $M$ and for any $x\in M$, the $\asymp$-equivalence class of $x$, denoted by $[x]$, is called  the {\it archimedean class} of $x$; that means, $[x] = \{y\in M\mid y\asymp x\}$.\medskip

We are now in a position to establish the second main result of this section describing the sandpile group and all maximal subgroups of a sandpile monoid via archimedean classes.

\begin{theorem}\label{arch-theo}
Let $E$ be a sandpile graph and $S_E$ the set of all vertices in $E$ which do not connect to any cycle in $E$. Then, the following statements hold:
	
$(1)$ For all $x\in \rm{SP}$$(E)$, $[x]$ is a subsemigroup of $\rm{SP}$$(E)$ containing a unique idempotent;
	
$(2)$ $\rm{SP}$$(E) =\bigsqcup_{x\in \rm{Idem}(SP(E))}[x]$; 
	
$(3)$ $G(E) = \varinjlim_{x\in \rm{Idem}(SP(E))} G([x])$, where $G([x])$ is the Grothendieck group of $[x]$;

$(4)$ For all $x\in \rm{Idem}(SP(E))$, \[G[x] \cong x + [x]= \begin{cases}
G(E_{S_E}) & \text{if } x = 0,\\
G(E_H) & \text{if } x \neq 0, \end{cases}\]
where $H= \overline{\rm{supp}(x)}$. In particular,  $G(E) \cong e_{\max} + [e_{\max}],$ where $e_{\max} = \sum_{x\in \rm{Idem}(SP(E))}x$;
	
	
$(5)$ For every maximal subgroup $G$ of $\rm{SP}$$(E)$, $G \cong x + [x]$ for some $x\in \rm{Idem(SP(E))}$. Consequently, the set of all maximal subgroups of $\rm{SP}$$(E)$ is exactly the set $$\{x + [x]\mid x\in \rm{Idem(SP(E))}\}.$$
\end{theorem}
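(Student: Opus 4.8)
The plan is to prove the five items in order, leveraging the structural results already available: Theorem \ref{sandmonstruc} (especially items (2), (3), (7)), Corollary \ref{idem-sathere-rem}, Theorem \ref{BT-thm}, and the standard finite-commutative-monoid facts recalled before Definition \ref{sandpilegrp} (finite commutative monoids have a smallest ideal isomorphic to their Grothendieck group; every element has an idempotent power). For item (1), given $x\in\mathrm{SP}(E)$, closure of $[x]$ under $+$ is immediate from Fact \ref{Archorder}: if $y,y'\asymp x$ then $y+y'\le y+my\le(m+1)(y+y')$ and similarly on the other side, so $y+y'\asymp x$. For the idempotent: since $\mathrm{SP}(E)$ is finite, some power $nx$ is an idempotent $e$, and $nx\asymp x$ gives $e\in[x]$; uniqueness follows because two idempotents $e,f$ with $e\asymp f$ satisfy $e\le mf=f$ and $f\le ne=e$, so $e\le f$ and $f\le e$ in a finite monoid forces $e=f$ (antisymmetry of $\le$ on idempotents, using $e\wedge f$ from the lattice structure, or directly: $e\le f$ means $f=e+z$, hence $f=e+e+z=e+f$; symmetrically $e=e+f$, so $e=f$).

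For item (2), the decomposition $\mathrm{SP}(E)=\bigsqcup_{x\in\mathrm{Idem}(\mathrm{SP}(E))}[x]$ is just the partition of $\mathrm{SP}(E)$ into $\asymp$-classes, re-indexed by the unique idempotent in each class (item (1)); disjointness is disjointness of equivalence classes, and every class contains exactly one idempotent, so the index set is precisely $\mathrm{Idem}(\mathrm{SP}(E))$. For item (3), I would first observe that $\mathrm{Idem}(\mathrm{SP}(E))$ is directed (it is a lattice, by the remarks before Theorem \ref{sandmonstruc}), set up the transition maps $G([x])\to G([x'])$ for $x\le x'$ coming from the inclusion of archimedean classes (which by item (4) are the inclusions $G(E_H)\hookrightarrow G(E_{H'})$ of Theorem \ref{sandmonstruc}(1),(6)), and then identify the colimit with $G(E)$ by the same argument as in the proof of Theorem \ref{sandmonstruc}(6) — or, more economically, by transporting that statement along the lattice isomorphism $\varsigma_E\colon\mathrm{Idem}(\mathrm{SP}(E))\to\mathcal H_E$ of Corollary \ref{idem-sathere-rem}, once item (4) identifies $G([x])$ with $G(E_H)$ compatibly with the transition maps.

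The heart of the argument is item (4), and that is where I expect the main obstacle. Fix $0\neq x\in\mathrm{Idem}(\mathrm{SP}(E))$ and let $H=\overline{\mathrm{supp}(x)}$, so $H\in\mathcal H_E$ by Corollary \ref{idem-sathere-rem}. I claim $[x]=\{y\in\mathrm{SP}(E_H):y\asymp x \text{ in }\mathrm{SP}(E_H)\}$ and that this archimedean class is exactly the one containing the ``top'' recurrent part of $\mathrm{SP}(E_H)$; concretely, I want to show $x=z_H$ in the notation of Theorem \ref{sandmonstruc}(7), i.e., $x$ is the sum of all idempotents of $\mathrm{SP}(E_H)$. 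The key point: by Theorem \ref{BT-thm} the idempotents of $\mathrm{SP}(E)$ below $x$ correspond to filters of $\mathcal C_E$ contained in $\delta_E(x)$, which are exactly the filters of $\mathcal C_{E_H}$, i.e., the idempotents of $\mathrm{SP}(E_H)$ — so $z_H=\sum_{e\in\mathrm{Idem}(\mathrm{SP}(E_H))}e$ is the join of all idempotents $\le x$ in $\mathrm{SP}(E)$, hence $z_H\le x$; conversely $x$ itself is an idempotent of $\mathrm{SP}(E_H)$ appearing in that sum, giving $x\le z_H$, so $x=z_H$. Then $x+[x]=z_H+[z_H]$; since $z_H$ is the maximum idempotent of the finite monoid $\mathrm{SP}(E_H)$, its archimedean class is the minimal ideal of $\mathrm{SP}(E_H)$ (every $y\in\mathrm{SP}(E_H)$ has $z_H\le my$ for suitable $m$ because some power of $y$ is an idempotent $\le z_H$, and $y\le$ nothing forces it down — one checks $z_H\asymp$ each element of the smallest ideal), so $x+[x]=z_H+\mathrm{SP}(E_H)=G(E_H)$ by Theorem \ref{sandmonstruc}(7). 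The $x=0$ case is the same computation inside $\mathrm{SP}(E_{S_E})=Z(\mathrm{SP}(E))$ (Theorem \ref{sandmonstruc}(2)), where the archimedean class of $0$ is the unit group and $G(E_{S_E})$ is that group. Finally, the ``in particular'' statement follows because $e_{\max}=\sum_{x\in\mathrm{Idem}(\mathrm{SP}(E))}x$ is the maximum idempotent, corresponding under $\varsigma_E$ to $H=E^0$, so $G([e_{\max}])=G(E_{E^0})=G(E)$. For item (5): any maximal subgroup $G$ of $\mathrm{SP}(E)$ has an identity element $e$, which is an idempotent; then $G\subseteq Z(e+\mathrm{SP}(E))$ and the latter is itself a group containing $G$, so by maximality $G=Z(e+\mathrm{SP}(E))$, which by the computation in item (4) (combined with Theorem \ref{sandmonstruc}(7)'s identification $G(E_H)=Z(z_H+\mathrm{SP}(E))$) equals $e+[e]$. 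Conversely each $e+[e]=G(E_H)$ is a maximal subgroup by Theorem \ref{sandmonstruc}(7). I expect the fiddly part to be the bookkeeping showing $x=z_H$ and that transition maps in (3) agree with those of Theorem \ref{sandmonstruc}(6) under $\varsigma_E$; everything else is assembling already-proved pieces.
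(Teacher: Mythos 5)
Your plan is structurally sound and, for items (1), (2) and (5), matches the paper's proof essentially step for step; for item (3) you propose transporting Theorem \ref{sandmonstruc}(6) along the isomorphism of Corollary \ref{idem-sathere-rem} instead of re-running the direct-limit verification as the paper does, which is a legitimate and somewhat more economical route provided the compatibility of transition maps (which you flag) is actually checked. The identification $x=z_H$ via Theorem \ref{BT-thm} is also fine and is exactly the paper's observation that $x$ is the largest idempotent of $\SP(E_H)$.

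There is, however, one genuinely false step in your item (4): the archimedean class of the maximum idempotent $z_H$ of $\SP(E_H)$ is \emph{not} the minimal ideal of $\SP(E_H)$ in general. Take the sandpile graph with one vertex $v$ carrying three loops and one edge to the sink, so $\SP(E)=\{0,v,2v,3v\}$ with $4v=3v$. Here $z_H=3v$ and the minimal ideal is $\{3v\}$, but $v\asymp 3v$ (indeed $3v=v+2v$ gives $v\le 3v$, and $3v\le 3\cdot v$ trivially), so $[3v]=\{v,2v,3v\}\supsetneq\{3v\}$. The sub-claim you use to justify the assertion, namely that some power of $y$ being an idempotent $e\le z_H$ forces $z_H\le my$, is also wrong: $e\le z_H$ means $z_H=e+w$, which is the inequality in the opposite direction, and $z_H\le my$ genuinely fails for any $y$ lying in the class of a smaller idempotent (it is equivalent to $y\in[z_H]$). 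Fortunately the conclusion $x+[x]=z_H+\SP(E_H)=G(E_H)$ survives, and the correct argument uses only containments you already have: (i) $[x]\subseteq\SP(E_H)$ because $\SP(E_H)$ is an order-ideal (Theorem \ref{sandmonstruc}(3)) and $y\le mx$ for $y\in[x]$, so $x+[x]\subseteq x+\SP(E_H)=G(E_H)$ by Theorem \ref{sandmonstruc}(7); (ii) conversely, for $z\in\SP(E_H)$ one checks $x+z\in[x]$ directly ($x\le x+z$ trivially, and $x+z\le x$ because $x=z_H$ is recurrent in $\SP(E_H)$, so $x=(x+z)+u$ for some $u$), whence $x+z=x+(x+z)\in x+[x]$. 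This is what the paper does. A smaller omission: the isomorphism $G([x])\cong x+[x]$ itself, as opposed to the equality $x+[x]=G(E_H)$, is part of the statement and is needed for your version of item (3); the paper obtains it from the universal property of the Grothendieck group applied to $\phi_{x,x}\colon[x]\to x+[x]$, $y\mapsto x+y$, and you should include that step.
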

\begin{proof}
(1) Let $y$ and $z \in [x]$. We then have $x \le n y$, $y\le m x$, $z\le k x$ and $x\le lz$ for some $m, n, k, l\in \mathbb{N}$, and so $ y + z \le (m + k)x$ and $x \le ny \le n(y +z)$. This implies that $y +z \in [x]$, an hence $[x]$	is a subsemigroup of $\text{SP}(E)$. Since $[x]$ is finite and by \cite[Lemma 2.10]{CGGMS}, $t x$ is an idempotent for some $t\in \mathbb{N}^+$; that means, $[x]$ always contains an idempotent.
	
Assume that $e$ and $f$ are two idempotents of $[x]$. We then have $e\asymp f$, and so $e = f + x$ and $f = e +y$ for some $x, y\in \text{SP}(E)$. This implies that $e + f = f + x + f = f + x = e$ and $e + f = e + e + y = e + y = f$, and so $e = f$, proving (1).
	
(2) By Fact \ref{Archorder} and Item (1), we immediately obtain that  $\text{SP}(E) =\bigsqcup_{x\in \rm{Idem}(SP(E))}[x]$.
	
(3) Let $e$ and $f$ be two idempotents of $\text{SP}(E)$ with $e\le f$. We then have $f = e + x$ for some $x\in \text{SP}(E)$, and so $e + f = e + e + x = e+ x = f$.
	
Let $x \in [e]$. Then, $e = x + a$ and $n x= e + b$ for some $a, b\in \text{SP}(E)$ and $n\in \mathbb{N}^+$, and so $f = f + e = f + (x + a) = (f + x) + a$, that means, $f + x\le f$. It is obvious that $f\le f + x$, and so $f + x \in [f]$. Therefore, the corresponding $\phi_{e, f}: [e] \longrightarrow [f]$, defined by $x \longmapsto f + x$, is a map. Moreover, we have $$\phi_{e, f}(x + y) = f + x + y = f + x + f + y = \phi_{e, f}(x) + \phi_{e, f}(y)$$ for all $x, y\in [e]$, and so $\phi_{e, f}$ is a semigroup homomorphism. Also, it is not hard to check that
$\phi_{f, g}\circ\phi_{e, f} = \phi_{e, g}$ for all $e, f, g\in \rm{SP(E)}$ with $e\le f\le g$.
	
	Every semigroup homomorphisms $\phi_{e, f}$ induces a group homomorphism $\phi_{e, f}: G([e]) \longrightarrow G([f])$ such that $\phi_{e, f}([(x, y)]_0) = [(\phi_{e, f}(x), \phi_{e, f}(y))]_0$ for all $[(x, y)]_0\in G([e])$. We note that $\phi_{e, e}([(x, y)]_0) = [(\phi_{e, e}(x), \phi_{e, e}(y))]_0 = [(e+ x, e+y)]_0 =[(x, y)]_0 + [(e, e)]_0 = [(x, y)]_0$, and so $\phi_{e, f} = id_{G([e])}$. Therefore, the pair $(G[e], \phi_{e, f})$ is a direct system of finite abelian groups over $\rm{Idem}(SP(E))$. Let $\psi_e: G([e])\longrightarrow \varinjlim_{x\in \rm{Idem}(SP(E))} G([x])$ be the canonical homomorphism.
	
	For every $e\in \rm{Idem}(SP(E))$, since $[e]$ is a subsemigroup of $\text{SP}(E)$, there exists a group homomorphism $\alpha_e: G([e])\longrightarrow G(E)$ such that $\alpha_e([(x, y)]_0) = [(x, y)]_0$ for all $[(x, y)]_0\in G([e])$.  Let $e$ and $f$ be two idempotents of $\text{SP}(E)$ with $e\le f$. Then, for each $[(x, y)]_0\in G([e])$, we have $$\alpha_f \phi_{e, f}([(x, y)]_0) = [(f+ x, f+ y)]_0 = [(x, y)]_0 + [(e, f)]_0 \text{ in } G(E)$$ Since $e = e + e$ and $f = f +f$, we must have $[(e, f)]_0 = [(e + f, e+f)]_0$ in $G(E)$; that means, $[(e, f)]_0$ is the identity of $G(E)$. Hence, we obtain that $$\alpha_f \phi_{e, f}([(x, y)]_0) = [(x, y)]_0 + [(e, f)]_0 = [(x, y)]_0 = \alpha_e([(x, y)]_0) \text{ in } G(E),$$ that means, $\alpha_f \phi_{e, f} = \alpha_e$. By the Universal Property of $\varinjlim_{x\in \rm{Idem}(SP(E))} G([x])$, there is a unique group homomorphism $\beta: \varinjlim_{x\in \rm{Idem}(SP(E))} G([x])\longrightarrow G(E)$ such that $\beta \psi_e = \alpha_e$ for all $e\in \rm{Idem(SP(E))}$. Then, for each $[(x, y)]_0\in G([e])$, we have $[(x, y)]_0 = \beta([(x, y)]_0)$ in $G(E)$. Since $\text{SP}(E) =\bigsqcup_{x\in \rm{Idem}(SP(E))}[x]$, we immediately get that $\beta$ is surjective.
	
	Let $[(x, y)]_0\in G([e])$ with  $\beta([(x, y)]_0) = [(x, y)]_0 = 0_{G(E)}$. We then have $[(x, y)]_0 = [e, e]_0$ in $G(E)$, that means, $x + e + z = y + e + z$ for some $z\in \text{SP}(E)$. By item (2), there exists an idempotent $f\in\text{SP}(E)$ such that $z\in [f]$. Then, since $x + e, y +e \in [e]$ and $z + f\in [f]$, we get that $x + e +f, y +e +f, z +f +e \in [e + f]$. Since $x + e + z +f = y + e + z +f$, $ 0_{G[e +f]} =[(x + e +f, y +e +f)]_0 = \phi_{e, e+f}([(x , y)]_0)$, and so $[(x , y)]_0$ is zero in $\varinjlim_{x\in \rm{Idem}(SP(E))} G([x])$. Therefore, $\beta$ is injective, and so it is an isomorphism.
	
	
	(4) Let $x\in \rm{Idem}(SP(E))$. We claim that $x + [x]$ is a group with identity $x$. Indeed, by Item (1), $x + [x]$  is  clearly a monoid with identity $x$. Let $a$ be an arbitrary element of $[x]$. Then, by \cite[Lemma 2.10]{CGGMS}, $ n a$ is an idempotent of $[x]$ for some $n\in\mathbb{N}^+$. By item (1), we get that $x = n a$, and so $x + a + (n-1)a = x + na = x + x = x$. This shows that $x + [x]$ is a group with identity $x$, proving the claim. 
	
	Let $\iota_x: [x] \longrightarrow G([x])$ be the canonical semigroup homomorphism, that means, $\iota_x(y) = [y, y+x]_0$ for all $y\in [x]$. We note that $\phi_{x, x}: [x]\longrightarrow [x]$, $y\longmapsto x + y$, is a semigroup homomorphism with $\phi_{x, x}([x]) = x + [x]$. By the Universal Property of $G([x])$, there is a unique group homomorphism $\delta: G([x])\longrightarrow x + [x]$ such that $\delta \iota_x =\phi_{x, x}$, and so $$\delta([y, z]_0) = \delta([y, y+z]_0) - \delta([z, y+ z]_0) = \delta \iota_x(y)-\delta \iota_x(z) = x +y - (x + z)$$ for all $y, z\in [x]$. This implies that $\delta([y, z]_0) =0$ $\Longleftrightarrow x + y = x + z$ in $[x]$, and so $[(y, z)]_0 = [(x, x)]_0 = 0_{G([x])}$. Therefore, $\delta$ is injective.
	
	For every $x + y \in x + [x]$, we have $\delta([(y, x +y)]_0) = \delta \iota_x(y) = \phi_{x, x}(y) = x +y$, and so $\delta$ is surjective. Therefore, we obtain that $G([x])\cong x + [x]$.
	
	If $x = 0$, then $G([0]) = Z(\text{SP}(E)) = \text{SP}(E_{S_E}) = G(E_{S_E}) = [0]$, as desired. Suppose $x\neq 0$ and $H:= \overline{\rm{supp}(x)}$. It is obvious that $x + x=x\in \text{SP}(E_H)$. Then, since $\text{SP}(E_H)$ is an order-ideal of $\text{SP}(E)$ (by Theorem \ref{sandmonstruc} (3)), $x + [x]\subseteq \text{SP}(E_H)$. By Corollary \ref{idem-sathere-rem}, $x$ is the biggest idempotent of  $\text{SP}(E_H)$. According to Theorem \ref{sandmonstruc} (7), we immediately obtain that $G(E_H) = x +\text{SP}(E_H)$, and so 
	$x + [x]\subseteq x +\text{SP}(E_H) = G(E_H)$.  Conversely, for every $y\in G(E_H)$, by Proposition \ref{recur}, $y = x + z$ for some $z\in \text{SP}(E_H)$. By \cite[Lemma 2.10]{CGGMS}, $m z$ is an idempotent of $\text{SP}(E_H)$ for some $m\in \mathbb{N}^+$, and so $mz\le x$. This implies that $x + z \le x + x = x$, and so $x + z \in [x]$. Then, $y = x + z = x + (x + z)\in x + [x]$, that means, $G(E_H) \subseteq x + [x]$.
	
(5) Let $G$ be a maximal idempotent of $\text{SP}(E)$, that means, $G = Z(x + \text{SP}(E))$, the unit group of $x + \text{SP}(E)$, for some $x \in \rm{Idem}(SP(E))$. If $x = 0$, then $G = Z(\text{SP}(E)) = \text{SP}(E_{S_E}) = G(E_{S_E}) = [0]$. Consider the case when $x\neq 0$. Let $H= \overline{\rm{supp}(x)}$. By Corollary \ref{idem-sathere-rem}, $x$ is the biggest idempotent of  $\text{SP}(E_H)$. By the proof of Theorem \ref{sandmonstruc} (7), 
$G = Z(x+\text{SP}(E)) = x + \text{SP}(E_H) = G(E_H)$. By Item (4), we have $G = G(E_H) = x + [x]$. Using this observation, Item (4) and 
Theorem \ref{sandmonstruc} (7), we immediately obtain that the set of all maximal subgroups of $\rm{SP}$$(E)$ is exactly the set $\{x + [x]\mid x\in \rm{Idem(SP(E))}\},$ thus finishing the proof.
\end{proof}

\section{A connection between sandpile monoids and weighted Leavitt path algebras} \label{weightedgraphssec4}
In this section, based on Theorem \ref{idem-fiter-here-ordideal},
we show that the lattice of all idempotents of the sandpile momoid $\text{SP}(E)$ of a sandpile graph $E$ is both isomorphic to the lattice of all nonzero graded ideals of the Leavitt path algebra $L_{\K}(E)$ and the lattice of all ideals the weighted Leavitt path algebra $L_{\K}(E, \omega)$ generated by vertices (Theorem \ref{classideal}). Also, we give the structure of the Leavitt path algebra $L_{\K}(E)$ of a sandpile graph $E$, extending Zelmanov et al.'s result \cite[Theorem 1]{Zel} to the sandpile graph setting, and completely describe the structure of $L_{\K}(E)$ such that the lattice of all idempotents of $\text{SP}(E)$ is a chain (Theorem \ref{structheo1}). Consequently, we completely describe the structure of the (weighted) Leavitt path algebra of a sandpile graph $E$ such that $\text{SP}(E)$ has exactly two idempotents (Theorem \ref{twoidem}).\medskip

We begin this section by recalling some basic notions of weighted graphs.
 
\begin{deff}\label{weightedgraphdef}
A \emph{weighted graph} is a pair $(E,\omega)$, where $E$ is a graph and $\omega: E^1\rightarrow \mathbb N^+$ is a map. If $e\in E^1$, then $\omega(e)$ is called the \emph{weight} of $e$. A weighted graph $(E, \omega)$ is called  \emph{row-finite} if the graph $E$ is row-finite. A weighted graph $(E, \omega)$ is called  \emph{finite} if the graph $E$ is finite.

For each regular vertex $v$ in a weighted graph $(E,\omega)$ we set $\omega(v):=\max\{\omega(e)\mid e\in s^{-1}(v)\}$.  This gives  a map (called $\omega$ again) $\omega:E_{\reg}^0\rightarrow \mathbb N^+$. 

A row-finite weighted graph $(E, \omega)$ is called a \emph{vertex weighted graph} if for any $v\in E^0_{\reg}$, $\omega(e) = \omega(e')$ for all  $e,e' \in s^{-1}(v)$.     

A vertex weighted graph is called a \emph{balanced weighted graph} if  $\omega(v)=|s^{-1}(v)|$ for all $v\in E_{\reg}^0$.   
\end{deff}

It is worth mentioning the following note.

\begin{remark}\label{balancedweightedgraphremark}
Note that any row-finite directed graph $E$ can be given the structure of a balanced weighted graph $(E, \omega)$ by assigning $\omega(v)=|s^{-1}(v)|$ for all $v\in E_{\reg}^0$, and the now-so called {\it balanced weighted graph associated with} $E$.  
\end{remark}


We next recall the notion of weighted Leavitt path algebras. These are algebras associated to row-finite weighted graphs. We refer the reader to \cite{H} and \cite{Pre} for a detailed analysis of these algebras. 
\begin{deff}\label{weighteddef}
Let $(E,\omega)$ be a  row-finite weighted graph and $\K$ a field.  The free $\K$-algebra generated by $\{v,e_i,e_i^*\mid v\in E^0, e\in E^1, 1\leq i\leq \omega(e)\}$ subject to relations
\begin{enumerate}[(i)]
		\item $uv=\delta_{uv}u,  \text{ where } u,v\in E^0$,
		\medskip
		\item $s(e)e_i=e_i=e_ir(e),~r(e)e_i^*=e_i^*=e_i^*s(e),  \text{ where } e\in E^1, 1\leq i\leq \omega(e)$,
		\medskip
		\item 
		$\sum_{e\in s^{-1}(v)}e_ie_j^*= \delta_{ij}v, \text{ where } v\in E_{\reg}^0 \text{ and } 1\leq i, j\leq \omega(v)$, 
		\medskip 
		\item $\sum_{1\leq i\leq \omega(v)}e_i^*f_i= \delta_{ef}r(e), \text{ where } v\in E_{\reg}^0 \text{ and } e,f\in s^{-1}(v)$,
\end{enumerate}
is called the {\it weighted Leavitt path algebra} of $(E,\omega)$ over $\K$, and denoted $L_\K(E,\omega)$, where $\delta$ is the Kronecker delta. In relations (iii) and (iv) we set $e_i$ and $e_i^*$ to be zero whenever $i > \omega(e)$.    
\end{deff}

Note that if the weight of each edge is $1$, then $L_\K(E,\omega)$ reduces to the usual Leavitt path algebra $L_\K(E)$  of the graph $E$.  It is easy to see that the mappings given by $v\longmapsto v$ for all $v\in E^0$, $e_i\longmapsto e^*_i$ and $e^*_i\longmapsto e_i$ for all $e\in E^1$ and $1\le i\le \omega(e)$, produce an involution on the $\K$-algebra $L_{\K}(E, \omega)$. Also, $L_{\K}(E, \omega)$ is a $\mathbb{Z}$-graded $\K$-algebra with grading induced by $\deg(v) =0$ for all $v\in E^0$, $\deg(e_i) =1$ and $\deg(e^*_i) = -1$ for all $e\in E^1$ and $1\le i\le \omega(e)$.\medskip


Let $\K$ be a field, $E$ a sandpile graph and $(E, \omega)$ the 
balanced weighted graph associated with $E$. Let $L_{\K}(E)$ be the Leavitt path algebra of $E$ and $L_{\K}(E, \omega)$ the weighted Leavitt path algebra of $(E, \omega)$. We denote by $\mathcal{L}_{\text{gr}}(L_{\K}(E))$ the set of nonzero graded ideals of $L_{\K}(E)$, and denote by $\mathcal{L}_{\text{ver}}(L_{\K}(E, \omega))$ the set of all ideals of $L_{\K}(E, \omega)$ generated by vertices. It is well-known that every nonzero graded ideal $I$ of $L_{\K}(E)$ always contains a vertex $v\in E^0$. Since $E$ is a sandpile graph, there is a path $p$ in $E$ such that $v = s(p)$ and $s = r(p)$, and so  $s = r(p) = p^*p = p^*vp\in I$, that means, $I$ always contains the sink $s$. This implies that $(\mathcal{L}_{\text{gr}}(L_{\K}(E)), \subseteq)$ is a lattice. Similarly, we have that $(\mathcal{L}_{\text{ver}}(L_{\K}(E, \omega)), \subseteq)$ is a lattice. Moreover, the following theorem shows that these lattices are isomorphic to the lattice of all idempotents of $\text{SP}(E)$.\medskip

\begin{theorem}\label{classideal}
Let $\K$ be a field, $E$ a sandpile graph and  $(E, \omega)$ the balanced weighted graph associated with $E$. Then, there is a lattice isomorphism between the following lattices:
	
$(1)$ The lattice $\rm{Idem}(SP(E))$ of all idempotents of $\rm{SP}$$(E)$;
	
$(2)$ The lattice $\mathcal{F}_E$ of all filters of $\mathcal{C}_E$;
	
$(3)$ The lattice $\mathcal{H}_E$ of all nonempty saturated hereditary subsets of $E^0$;
	
$(4)$ The lattice $\mathcal{L}(\rm{SP}$$(E))$ of all order-ideals of $\rm{SP}$$(E)$;
	
$(5)$ The lattice $\mathcal{L}_{\rm{gr}}(L_{\K}(E))$ of all nonzero graded ideals of $L_{\K}(E)$;
	
$(6)$ The lattice $\mathcal{L}_{\rm{ver}}(L_{\K}(E, \omega))$ of all ideals of $L_{\K}(E, \omega)$ generated by vertices.
\end{theorem}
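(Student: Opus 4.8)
The plan is to link everything through $\mathcal{H}_E$, since Theorem~\ref{idem-fiter-here-ordideal} already establishes that items (1)--(4) are mutually isomorphic as lattices; thus it suffices to produce two lattice isomorphisms $\mathcal{H}_E \cong \mathcal{L}_{\mathrm{gr}}(L_{\K}(E))$ and $\mathcal{H}_E \cong \mathcal{L}_{\mathrm{ver}}(L_{\K}(E,\omega))$. For the first, I would invoke the standard structure theory of Leavitt path algebras: for a row-finite graph $E$ the assignment $H \mapsto I(H)$, the ideal generated by $H$, is an order-isomorphism from the lattice of saturated hereditary subsets of $E^0$ onto the lattice of graded ideals of $L_{\K}(E)$ (this is the graded ideal theorem, e.g.\ from \cite{TheBook}). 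The only adjustment needed is the observation already recorded in the paragraph preceding the statement: when $E$ is a sandpile graph every nonzero graded ideal contains the sink $s$, hence contains $S_E$-related data and in particular corresponds to a \emph{nonempty} saturated hereditary set; conversely every $H\in\mathcal{H}_E$ is nonempty and gives a nonzero ideal. So restricting the classical bijection to nonzero ideals matches it with $\mathcal{H}_E$, and restriction of a lattice isomorphism to a sublattice (on both sides) is again a lattice isomorphism.

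For the second isomorphism I would use the corresponding result for weighted Leavitt path algebras. Since $(E,\omega)$ is the \emph{balanced} weighted graph associated with $E$, the ideal lattice behaves well: the papers \cite{H} and \cite{Pre} (or the companion work on ideals of weighted Leavitt path algebras) give that $H \mapsto I(H)$ is an order-isomorphism from saturated hereditary subsets of $E^0$ onto the lattice of ideals of $L_{\K}(E,\omega)$ generated by vertices. The key point to verify here is that for a balanced weighted graph the "vertex-generated" ideals are exactly the analogues of graded ideals in the unweighted case and that saturation is the correct closure condition — this follows because in the balanced case relations (iii) and (iv) of Definition~\ref{weighteddef} reduce, at the level of the idempotent/vertex structure, to the same combinatorics governing $\mathcal{H}_E$. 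Again one restricts to nonempty $H$ using the sandpile property (every vertex connects to $s$, so every nonzero vertex-generated ideal contains $s$), obtaining the bijection with $\mathcal{H}_E$.

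Concretely I would organize the proof as follows. Step one: recall from Theorem~\ref{idem-fiter-here-ordideal} that (1) $\cong$ (2) $\cong$ (3) $\cong$ (4), so it remains to show (3) $\cong$ (5) and (3) $\cong$ (6). Step two: state the graded ideal theorem for $L_{\K}(E)$, note that the map $H \mapsto I(H)$ and its inverse $I \mapsto I\cap E^0$ are mutually inverse order-preserving maps, and check that under the sandpile hypothesis they restrict to a bijection $\mathcal{H}_E \leftrightarrow \mathcal{L}_{\mathrm{gr}}(L_{\K}(E))$; since both are order-isomorphisms between complete lattices they preserve joins and meets. Step three: repeat verbatim for $L_{\K}(E,\omega)$ using the balanced-weighted ideal correspondence, concluding $\mathcal{H}_E \cong \mathcal{L}_{\mathrm{ver}}(L_{\K}(E,\omega))$. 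Step four: chain all the isomorphisms.

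The main obstacle is Step three: making precise which reference yields the order-isomorphism between saturated hereditary subsets and vertex-generated ideals of $L_{\K}(E,\omega)$ for a balanced weighted graph, and confirming that "generated by vertices" is the right weighted substitute for "graded". Once that structural input is in hand, everything else is bookkeeping about restricting lattice isomorphisms to sublattices. I would also be careful to note explicitly that $(\mathcal{L}_{\mathrm{gr}}(L_{\K}(E)),\subseteq)$ and $(\mathcal{L}_{\mathrm{ver}}(L_{\K}(E,\omega)),\subseteq)$ are indeed \emph{lattices} (not merely posets) — which is exactly the content of the paragraph immediately preceding the theorem statement, and which transfers automatically once the order-isomorphism with the complete lattice $\mathcal{H}_E$ is established.
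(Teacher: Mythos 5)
Your overall route is exactly the paper's: reduce everything to $\mathcal{H}_E$ via Theorem~\ref{idem-fiter-here-ordideal}, get $\mathcal{H}_E\cong\mathcal{L}_{\rm{gr}}(L_{\K}(E))$ from the graded ideal correspondence in \cite{TheBook} (the paper cites Theorem~2.5.8 there), and then handle the weighted case separately. The one place you stop short --- the correspondence between $\mathcal{H}_E$ and $\mathcal{L}_{\rm{ver}}(L_{\K}(E,\omega))$, which you defer to an unspecified reference --- is precisely where the paper does some actual work, and it is worth seeing how little is needed. One direction is a two-line computation straight from Definition~\ref{weighteddef}: if $I$ is generated by vertices and $v=s(e)\in I$, then $e_i=ve_i\in I$ for all $i$, hence $r(e)=\sum_{1\le i\le\omega(v)}e_i^*e_i\in I$ by relation (iv), so $I\cap E^0$ is hereditary; and if $r(s^{-1}(v))\subseteq I\cap E^0$ then each $e_1=e_1r(e)\in I$, hence $v=\sum_{e\in s^{-1}(v)}e_1e_1^*\in I$ by relation (iii), so $I\cap E^0$ is saturated. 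Note this argument does not actually use balancedness, contrary to your suggestion that the balanced hypothesis is what makes the vertex-generated ideals behave like graded ideals. The converse direction, $I(H)\cap E^0=H$ for $H$ hereditary and saturated, is the genuinely nontrivial input, and the paper cites \cite[Theorem 2.10]{HN} for it rather than \cite{H} or \cite{Pre}. With that citation and the computation above, the map $I\mapsto I\cap E^0$ is the required lattice isomorphism, and the rest of your bookkeeping (nonemptiness from the sandpile hypothesis, restriction of order-isomorphisms) goes through as you describe.
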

\begin{proof}
By Theorem \ref{idem-fiter-here-ordideal}, we obtain that \[\rm{Idem}(SP(E))\cong \mathcal{F}_E\cong \mathcal{H}_E\cong \mathcal{L}(\text{SP}(E))\] as lattices. By \cite[Theorem 2.5.8]{TheBook}, we have $$\mathcal{H}_E\cong \mathcal{L}_{\text{gr}}(L_{\K}(E))$$ as lattices.

We next claim that \[\mathcal{H}_E\cong \mathcal{L}_{\text{ver}}(L_{\K}(E, \omega))\] as lattices. Indeed, let $I$ be an ideal of $L_{\K}(E, \omega)$ generated by vertices. It is obvious that $I \cap E^0\neq \emptyset$.
We show that $I\cap E^0$ is a hereditary and saturated subset of $E^0$. Let $e\in E^1$ with $v:=s(e)\in I$. We then have $e_i = ve_i\in I$ for all  $1\le i\le \omega(e)$, and so $r(e)=\sum_{1\leq i\leq \omega(v)}e_i^*e_i\in I$. This implies that $I\cap E^0$ is hereditary. Let $v\in E_{\reg}^0$ with the property that $r(s^{-1}(v))\subseteq I\cap E^0$. We then have $r(e_1)\in I$ for all $e\in s^{-1}(v)$, and so $e_1 = e_1 r(e_1)\in I$ for all $e\in s^{-1}(v)$. Therefore, we obtain that $v = \sum_{e\in s^{-1}(v)}e_1e_1^*\in I$, and so $I\cap E^0$ is saturated, as desired.

On the other hand if $H$ is hereditary and saturated and $I(H)$ is an ideal of $L(E,\omega)$ generated by $H$, then $I(H) \cap E^0= H$ by \cite[Theorem 2.10]{HN}. From these observations, we immediately obtain that the corresponding $\alpha: \mathcal{L}_{\text{ver}}(L_{\K}(E, \omega))\longrightarrow \mathcal{H}_E$, defined by $I\longmapsto I\cap E^0$, is a lattice isomorphism, thus finishing the proof.
\end{proof}

As a corollary of Theorem \ref{classideal}, we have the following useful fact.

\begin{cor}\label{chain}
Let $\K$ be a field, $E$ a sandpile graph and  $(E, \omega)$ the balanced weighted graph associated with $E$. Then, the following statements are equivalent:
	
$(1)$ The lattice $\rm{Idem}(SP(E))$ is a chain;
	
$(2)$ The lattice $\mathcal{F}_E$ is a chain;
	
$(3)$ The partially ordered set $\mathcal{C}_E$	is a chain;
	
$(4)$ The lattice $\mathcal{H}_E$ is a chain;
	
$(5)$ The lattice $\mathcal{L}_{\rm{gr}}(L_{\K}(E))$ is a chain;
	
$(6)$ The lattice $\mathcal{L}_{\rm{ver}}(L_{\K}(E, \omega))$ is a chain. 
\end{cor}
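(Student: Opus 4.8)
The plan is to deduce Corollary \ref{chain} as an immediate consequence of Theorem \ref{classideal} together with Corollary \ref{idem-sathere-rem} (and the lattice isomorphisms assembled in Theorem \ref{idem-fiter-here-ordideal}). The key observation is purely order-theoretic: if $\Lambda_1$ and $\Lambda_2$ are lattices and $f\colon \Lambda_1 \to \Lambda_2$ is a lattice isomorphism, then $\Lambda_1$ is a chain (that is, linearly ordered) if and only if $\Lambda_2$ is a chain, since $f$ and $f^{-1}$ are order-preserving and bijective, so comparability of any pair of elements is transported in both directions. Thus once all six objects in the statement are known to be pairwise isomorphic as partially ordered sets, the equivalence of the six chain conditions is automatic.

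Concretely, I would proceed as follows. First, by Theorem \ref{classideal} we already have lattice isomorphisms
\[
\rm{Idem}(SP(E))\;\cong\;\mathcal{F}_E\;\cong\;\mathcal{H}_E\;\cong\;\mathcal{L}(\rm{SP}(E))\;\cong\;\mathcal{L}_{\rm{gr}}(L_{\K}(E))\;\cong\;\mathcal{L}_{\rm{ver}}(L_{\K}(E,\omega)),
\]
so by the order-theoretic observation above, conditions $(1)$, $(2)$, $(4)$, $(5)$, $(6)$ are all equivalent to one another. The only item not literally appearing in Theorem \ref{classideal} is $(3)$, the chain condition on $\mathcal{C}_E$. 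Here I would invoke Lemma \ref{fil-ideal}: its proof establishes that $C\leq C'$ in $\mathcal{C}_E$ if and only if $H_C\subseteq H_{C'}$, and that $\psi_E\colon \mathcal{F}_E\to \rm{Ideal}(\mathcal{H}^s_E)$ is a lattice isomorphism. Since $\mathcal{F}_E$ is precisely the lattice of filters of the poset $\mathcal{C}_E$, a standard fact is that the poset of filters (equivalently ideals) of a finite poset $P$ is a chain if and only if $P$ itself is a chain: indeed, if $C, C'$ are incomparable in $\mathcal{C}_E$, then the principal filters $\uparrow C$ and $\uparrow C'$ are incomparable in $\mathcal{F}_E$; conversely if $\mathcal{C}_E$ is a chain then its filters are exactly the up-sets $\uparrow C$ together with $\emptyset$ and $\mathcal{C}_E$, which form a chain under inclusion. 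This gives $(2)\Leftrightarrow(3)$ and closes the loop.

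The argument has essentially no obstacle; the work is entirely bookkeeping of which isomorphisms have already been proved. The one place demanding a sentence of genuine (though elementary) argument is the equivalence $(2)\Leftrightarrow(3)$, linking the chain condition on the base poset $\mathcal{C}_E$ to the chain condition on its lattice of filters $\mathcal{F}_E$; I would spell this out using principal filters as indicated. Everything else is a formal transport of the chain property along the isomorphisms of Theorem \ref{classideal}. If one prefers, $(2)\Leftrightarrow(3)$ can alternatively be read off from the explicit description in Lemma \ref{fil-hered} and Corollary \ref{idem-sathere-rem} of the generators of $\mathcal{H}_E$ in terms of the components $C\in\mathcal{C}_E$, but the principal-filter argument is the cleanest.
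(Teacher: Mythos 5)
Your proposal is correct and follows essentially the same route as the paper: reduce everything to the equivalence of (2) and (3) via Theorem \ref{classideal}, and then settle $(2)\Leftrightarrow(3)$ by observing that incomparable components $C_1, C_2$ in $\mathcal{C}_E$ yield incomparable principal filters $F_{C_1}, F_{C_2}$ in $\mathcal{F}_E$, which is exactly the paper's argument.
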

\begin{proof}
By Theorem \ref{classideal}, it suffices to show the equivalence of (2) and (3). It is obvious that if the partially ordered set $\mathcal{C}_E$	is a chain, then so is the lattice $\mathcal{F}_E$. Assume that the lattice $\mathcal{F}_E$ is a chain, but $\mathcal{C}_E$	is not a chain. Then, there exist strongly connected cyclic components $C_1$ and $C_2$ of $E$ such that $C_1\nleq C_2$ and $C_2\nleq C_1$. Let $F_{C_1} = \{C\in \mathcal{C}_E\mid C_1\le C\}$ and $F_{C_2} = \{D\in \mathcal{C}_E\mid C_2\le D\}$. It is obvious that $F_{C_1}$ and $F_{C_2}\in \mathcal{F}_E$. Since $C_1\nleq C_2$ and $C_2\nleq C_1$, we have $F_{C_1}\nleq F_{C_2}$ and $F_{C_2}\nleq F_{C_1}$, that means, $\mathcal{F}_E$ is not a chain, a contradiction. Therefore, $\mathcal{C}_E$	is a chain, thus finishing the proof.
\end{proof}	

In the remainder of this section, we provide structural connections between sandpile monoids and (weighted) Leavitt path algebras. To do so, we need to represent useful notions and facts.

\begin{deff}[{\cite[Definition 2.5.16]{TheBook}}]\label{hedgehoggraph}
	Let $E$ be a graph and let $H$ be a hereditary subset of $E^0$. Consider the set
	\[F(H) =\{\alpha \mid \alpha = e_1e_2\hdots e_n, s_E(e_n)\notin H, r_E(e_n)\in H\}.\] Let
	$\overline{F}(H)$ be another copy of $F(H)$ and we write $\overline{\alpha}$ for the
	copy of $\alpha$ in $\overline{F}(H)$. Define a graph $E(H)$ as follows:
	\begin{equation*}
	\begin{array}{l}
	E(H)^0 = H\cup F(H)\\
	E(H)^1 = s^{-1}_E(H)\cup \overline{F}(H)\\
	\end{array}
	\end{equation*}
	and extend $s_E$ and $r_E$ to $E(H)$ by defining
	$s_{E(H)}(\overline{\alpha}) = \alpha$ and
	$r_{E(H)}(\overline{\alpha}) = r(\alpha)$.
\end{deff}

For clarification, we illustrate Definition \ref{hedgehoggraph} by presenting the following example.

\begin{example}
	Let $E$ be the graph
	$$\xymatrix{&\bullet^{v_3} \ar[d]^{e_3}&&\\
		\bullet^{v_2} \ar[r]^{e_2}& \bullet^{v_1} \ar[r]^{e_1} &
		\bullet{v_0} \ar@(ul,ur)^{e_0} \ar[r]^e& \bullet^v}$$ and
	$H=\{v_0, v\}$. Then $F(H)=\{e_1,e_2e_1,e_3e_1\}$. Therefore, the graph
	$$\xymatrix{\bullet^{e_3e_1} \ar[dr]^{\overline{e_3e_1}}&& \\
		\bullet^{e_1} \ar[r]^{\overline{e_1}}& \bullet^{v_0} \ar@(ul,ur)^{e_0} \ar[r]^e & \bullet^v\\
		\bullet^{e_2e_1} \ar[ur]_{\overline{e_2e_1}}&& }$$ represents the graph $E(H)$.
\end{example}

We should mention the following important result.

\begin{theorem}[{\cite[Theorem 2.5.19]{TheBook}}]\label{hedgehog graphtheo}
	Let $\K$ be a field, $E$ an arbitrary graph, and $H$ a hereditary subset of $E^0$. Let $I(H)$ be the ideal of $L_{\K}(E)$ generated by $H$. Then, $I(H)\cong L_{\K}(E(H))$ as $\K$-algebras.
\end{theorem}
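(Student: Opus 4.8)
\medskip

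\noindent\textit{Proof proposal.} The plan is to use the universal property of $L_{\K}(E(H))$ to build a $\K$-algebra homomorphism $\phi\colon L_{\K}(E(H))\longrightarrow L_{\K}(E)$, to identify its image with $I(H)$, and then to prove injectivity by a graded-uniqueness argument. Since $H$ is hereditary, every edge in $s^{-1}_E(H)$ and every $\alpha\in F(H)$ is a path of $E$ with range in $H$, so one may define $\phi$ on generators by
\begin{align*}
\phi(v)&=v\ (v\in H), & \phi(e)&=e,\quad \phi(e^{*})=e^{*}\quad (e\in s^{-1}_E(H)),\\
\phi(\alpha)&=\alpha\alpha^{*}\ (\alpha\in F(H)), & \phi(\overline{\alpha})&=\alpha,\quad \phi(\overline{\alpha}^{*})=\alpha^{*}\quad (\alpha\in F(H)).
\end{align*}
The first step is to verify that these images satisfy the four families of defining relations of $L_{\K}(E(H))$, so that $\phi$ is a well-defined $*$-homomorphism. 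The relations among the vertices of $H$ and the Cuntz--Krieger relations at a vertex $v\in H$ are immediate, since $s^{-1}_{E(H)}(v)=s^{-1}_E(v)$ and these relations then coincide verbatim with the corresponding relations in $L_{\K}(E)$. Two points deserve a remark: first, the elements $\alpha\alpha^{*}$, $\alpha\in F(H)$, are pairwise orthogonal idempotents, which follows from $\alpha^{*}\alpha=r_E(\alpha)$ together with $\alpha^{*}\beta=\delta_{\alpha\beta}\,r_E(\alpha)$ for $\alpha,\beta\in F(H)$ --- the latter holds because, $H$ being hereditary, no element of $F(H)$ can be a proper prefix of another (otherwise a vertex of $H$ would occur strictly inside a path of $F(H)$); second, each vertex $\alpha\in F(H)$ emits the single edge $\overline{\alpha}$, so its Cuntz--Krieger relations read $\overline{\alpha}\,\overline{\alpha}^{*}=\alpha$ and $\overline{\alpha}^{*}\,\overline{\alpha}=r_{E(H)}(\overline{\alpha})$, which under $\phi$ become the identities $\alpha\alpha^{*}=\phi(\alpha)$ and $\alpha^{*}\alpha=r_E(\alpha)$, both valid in $L_{\K}(E)$.

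Next I would show $\im\phi=I(H)$. Each generator is sent into $I(H)$, since $v\in H$ and $\alpha\alpha^{*}=\alpha\,r_E(\alpha)\,\alpha^{*}$, $e=e\,r_E(e)$, $\alpha=\alpha\,r_E(\alpha)$ each carry a factor in $H$; hence $\im\phi\subseteq I(H)$. For the reverse inclusion one uses that $I(H)=\operatorname{span}_{\K}\{\mu\nu^{*}\mid \mu,\nu\ \text{paths in }E,\ r_E(\mu)=r_E(\nu)\in H\}$. Given such a path $\mu$, factor it at the first time it meets $H$: either $s_E(\mu)\in H$, in which case $\mu$ lies wholly in $E_H$ and is already a path of $E(H)$ built from edges of $s^{-1}_E(H)$; or $\mu=\mu_{0}\mu_{1}$ with $\mu_{0}\in F(H)$ and $\mu_{1}$ a path in $E_H$, and then $\mu=\phi(\overline{\mu_{0}}\,\mu_{1})$ for the genuine $E(H)$-path $\overline{\mu_{0}}\mu_{1}$. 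Applying the same factorisation to $\nu$ and noting that the two resulting $E(H)$-paths terminate at the common vertex $r_E(\mu)\in H$, one obtains $\mu\nu^{*}\in\im\phi$. Hence $\phi$ is a surjection onto $I(H)$.

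For injectivity I would invoke a graded uniqueness theorem, and here lies the delicate point: $\phi$ is not graded for the standard $\mathbb Z$-grading of $L_{\K}(E(H))$, since $\overline{\alpha}$ has degree $1$ while $\phi(\overline{\alpha})=\alpha$ has degree $|\alpha|$ in $L_{\K}(E)$. I would therefore regrade $L_{\K}(E(H))$ by the edge-weighting $w(e)=1$ for $e\in s^{-1}_E(H)$ and $w(\overline{\alpha})=|\alpha|$ for $\alpha\in F(H)$; any Leavitt path algebra is $\mathbb Z$-graded with respect to an arbitrary integer weighting of its edges, as all the defining relations remain homogeneous. With this grading on $L_{\K}(E(H))$ and the standard grading on $L_{\K}(E)$, the homomorphism $\phi$ is graded, and $\phi(u)\neq 0$ for every vertex $u$ of $E(H)$ (vertices of $E$ are nonzero in $L_{\K}(E)$, and each $\alpha\alpha^{*}$ is a nonzero idempotent). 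The graded uniqueness theorem then forces $\phi$ to be injective, so $I(H)\cong L_{\K}(E(H))$ as $\K$-algebras. The main obstacle is exactly this last step: one must notice that the obvious grading on the hedgehog graph $E(H)$ is not the one making $\phi$ graded and replace it by the weighting $w(\overline{\alpha})=|\alpha|$ before a uniqueness theorem can be applied; by comparison, the relation-checking and the decomposition of paths at their first hitting time of $H$ are routine bookkeeping.
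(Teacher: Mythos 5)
This statement is imported by the paper from \cite[Theorem~2.5.19]{TheBook} with no proof supplied, so there is no in-paper argument to compare against; your proposal is essentially the standard textbook proof of that result (same map on generators, same identification of the image with $I(H)$ via factoring paths at their first entry into $H$, same re-grading of $L_{\K}(E(H))$ before invoking graded uniqueness), and it is correct. The only point needing more care is your blanket claim that the graded uniqueness theorem applies to an arbitrary integer edge-weighting: that is false in general (for the zero weighting every ideal is graded, yet non-injective vertex-faithful homomorphisms exist, e.g.\ $\K[x,x^{-1}]\to\K$), but the proof via the Reduction Theorem does go through whenever the weighting is strictly positive on every cycle, since then a homogeneous element of a corner $v L_{\K}(E(H)) v\cong \K[x,x^{-1}]$ coming from a cycle without exit is a monomial, hence invertible there. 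Your weighting satisfies this, because every cycle of $E(H)$ lies in $E_H$ and consists of edges of weight $1$, so with that justification added the argument is complete.
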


The following lemma is useful for describing the structure of Leavitt path algebras of sandpile graphs.

\begin{lemma}\label{hedgehoggraph-simple}
	Let $\K$ be a field, $E$ an arbitrary graph and $H$ a hereditary subset of $E^0$. Then, the following statements hold:
	
	$(1)$ $L_{\K}(E_H)$ is a $\K$-subalgebra of $L_{\K}(E(H))$;
	
	$(2)$ If $H$ is simply a cycle based at $v$, then $L_{\K}(E(H))\cong \M_{\Lambda_v}(\K[x, x^{-1}])$, where $\Lambda_v$ is the set of all paths in $E$ which end at $v$, but which do
	not contain all the edges of $H$;
	
	$(3)$ If $L_{\K}(E_H)$ is purely infinite simple, then $L_{\K}(E(H))$ is also purely infinite simple.
\end{lemma}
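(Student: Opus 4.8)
The plan is to establish the three statements more or less independently, leaning on the hedgehog graph $E(H)$ and the structural machinery for Leavitt path algebras cited in the excerpt.

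For (1), the natural thing is to observe that $E_H$ is a subgraph of $E(H)$ in the obvious way: $E_H^0 = H \subseteq H \cup F(H) = E(H)^0$, and $E_H^1 = s_E^{-1}(H)\cap r_E^{-1}(H)$ sits inside $E(H)^1 = s_E^{-1}(H)\cup \overline{F}(H)$. First I would check that the defining relations (in the single-weight case, the CK1 and CK2 relations for $L_{\K}(E_H)$) are consequences of the relations in $L_{\K}(E(H))$; the point is that for $v\in H$ one has $s_E^{-1}(v)\subseteq s_E^{-1}(H)$, so $v$ is a regular vertex of $E(H)$ with exactly the same set of edges emitted, hence the CK2 relation at $v$ is literally the same equation in both algebras. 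Thus the universal property of $L_{\K}(E_H)$ produces a $\K$-algebra homomorphism $L_{\K}(E_H)\to L_{\K}(E(H))$; to see it is injective one applies the Graded Uniqueness Theorem, noting the homomorphism is graded and sends each vertex of $H$ to a nonzero element. This identifies $L_{\K}(E_H)$ with a graded $\K$-subalgebra.

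For (2), when $H$ is exactly the vertex set of a single cycle $c = e_1\cdots e_n$ based at $v$ with no exits \emph{within} $H$ (i.e. in $E_H$), the restriction graph $E_H$ is a bare cycle, so $L_{\K}(E_H)\cong\M_n(\K[x,x^{-1}])$. For $E(H)$ itself I would invoke the known computation of the Leavitt path algebra of a ``comet'' / cycle-with-stalks graph: $E(H)$ consists of the cycle $H$ together with, for each path $\alpha\in F(H)$, a single new vertex and a single new edge $\overline{\alpha}$ feeding into $r(\alpha)\in H$; every vertex of $F(H)$ is a source emitting exactly one edge, and $E(H)$ has a unique cycle with no exits. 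This is precisely the situation of \cite[Theorem 2.7.1 or Proposition 2.5.21]{TheBook} (the structure theorem for Leavitt path algebras of graphs in which the only cycle has no exit), which yields $L_{\K}(E(H))\cong \M_{\Lambda_v}(\K[x,x^{-1}])$ where $\Lambda_v$ indexes the paths in $E$ ending at $v$ that do not traverse all of $c$ — the index set being in bijection with the vertices of $E(H)$ together with the partial traversals of the cycle. I would present the bijection between $\Lambda_v$ and this vertex/path data explicitly but briefly.

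For (3), the cleanest route is via the characterization of pure infinite simplicity: $L_{\K}(E_H)$ is purely infinite simple iff $E_H$ is cofinal, every cycle in $E_H$ has an exit, and every vertex connects to a cycle (equivalently $H$ has no breaking vertices / sinks and every vertex reaches a cycle with an exit). I would show these conditions propagate to $E(H)$: the added vertices of $F(H)$ each emit a unique edge $\overline\alpha$ landing in $H$, so every vertex of $E(H)$ connects into $H$ and thence (by cofinality of $E_H$) to any given cycle; no new cycles are created, since the $F(H)$-part is a disjoint union of ``stalks'' with no returning edges, so every cycle of $E(H)$ lies in $H$ and hence has an exit; cofinality of $E(H)$ follows because from any vertex one reaches $H$ and $E_H$ is cofinal, while every vertex of $H$ reaches every vertex of $E(H)$? — no, it need not, so instead I would use the simplicity criterion phrased via hereditary saturated subsets: a hereditary saturated subset of $E(H)^0$ containing no vertex of a cycle must be empty, because it cannot contain any $F(H)$-vertex $\alpha$ unless it contains $r(\alpha)\in H$, and a nonempty hereditary saturated subset of $H$ is all of $H$ by cofinality of $E_H$, forcing it to contain the cycle. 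Combined with ``every vertex reaches a cycle with an exit'', the standard criterion (e.g. \cite[Theorem 3.1.10]{TheBook}) gives that $L_{\K}(E(H))$ is purely infinite simple.

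\textbf{Main obstacle.} I expect the delicate point to be item (2): pinning down the index set $\Lambda_v$ and the matrix size correctly, i.e. verifying that the vertices of $E(H)$ together with the proper initial subpaths of the cycle $c$ are in natural bijection with the set of paths in $E$ ending at $v$ that do not contain all edges of $H$, and feeding this cleanly into the comet-graph structure theorem. The other two parts are more routine: (1) is a universal-property-plus-graded-uniqueness argument, and (3) is a matter of transporting the well-known hereditary/saturated and cofinality criteria for pure infinite simplicity from $E_H$ to the stalk-extended graph $E(H)$.
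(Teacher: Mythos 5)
Your proposal is correct and follows essentially the same route as the paper: part (1) is the graded-uniqueness argument, and part (3) is exactly the paper's transfer of Condition (L) and the trivial-hereditary-saturated-subset criterion from $E_H$ to $E(H)$ (after your self-correction away from cofinality). For part (2), the paper sidesteps the index-set bookkeeping you flag as the main obstacle by composing the isomorphism $I(H)\cong L_{\K}(E(H))$ of Theorem \ref{hedgehog graphtheo} with \cite[Lemma 2.7.1]{TheBook}, which already describes the ideal of $L_{\K}(E)$ generated by a cycle without exits as $\M_{\Lambda_v}(\K[x, x^{-1}])$ with $\Lambda_v$ precisely the stated set of paths in $E$.
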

\begin{proof}
(1) By the graded uniqueness theorem (see, e.g., \cite[Theorem 2.2.15]{TheBook}), the map $\phi: L_{\K}(E_H)\longrightarrow L_{\K}(E(H))$, defined by: $v\longmapsto v$ for all $v\in H$ and $e\longmapsto e$, $e^*\longmapsto e^*$ for all $e\in s^{-1}(H)$, is an injective homomorphism of $\K$-algebras, and so $L_{\K}(E_H)$ can be considered as a $\K$-subalgebra of $L_{\K}(E(H))$.
	
(2) It immediately follows from Theorem \ref{hedgehog graphtheo} and \cite[Lemma 2.7.1]{TheBook}.
	
(3) Assume that $L_{\K}(E_H)$ is purely infinite simple. Then, by \cite[Theorem 11]{aap:pislpa}, $E_H$ has a cycle, and so $E(H)$ has a cycle, since $E(H)$ contains $E_H$ as a subgraph. Note that if $c$ is a cycle in $E(H)$, then $c$ lies in $E_H$. Since  $L_{\K}(E_H)$ is purely infinite simple and by \cite[Theorem 11]{aap:pislpa}, every cycle in $E_H$ has an exit. This implies that every cycle in $E(H)$ has an exit.
	
Let $G$ be a nonempty saturated hereditary subset of $E(H)^0$. If there exists an element $\alpha\in F(H)$ such that $\alpha\in G$, then $r(\alpha)\in H\cap G$. Otherwise, since $H\neq \varnothing$, $G$ must contain a vertex which lies in $H$. In any case, $G$ contains a vertex $v\in H$. Let $\overline{\{v\}}$ be the saturated and hereditary closure of $\{v\}$ in $E_H$. Since $L_{\K}(E_H)$ is purely infinite simple and by \cite[Theorem 11]{aap:pislpa}, $H = \overline{\{v\}}$, and so $H \subseteq G$. Then, since every $\alpha\in F(H)$ is a regular vertex in $E(H)$ and connects to $H$ via the edge $\overline{\alpha}$, it lies in $G$ (since $G$ is saturated). This shows that $G = (E(H))^0$, that means, the only hereditary and saturated subsets of $(E(H))^0$ are $\varnothing$ and $(E(H))^0$. From these observations and by \cite[Theorem 11]{aap:pislpa}, we obtain that $L_{\K}(E(H))$ is purely infinite simple, thus finishing the proof.
\end{proof}	

Let $E$ be an arbitrary graph and $\mathcal C_E$  the  partially ordered  set of all strongly connected cyclic components of $E$. A sequence of distinct elements $C_1, \hdots, C_k$ of $\mathcal C_E$ is a {\it chain of length} $k$ if $C_1\le \cdots\le C_k$. An edge $e$ is called an {\it exit} to a strongly connected cyclic component $C$ of $E$ if  $s(e)\in C^0$ and $r(e)\notin C^0$, where $C^0$ is the set of all vertices in $E$ which lie on $C$.

For a hereditary subset $H$ of $E^0$, we define the {\it quotient graph} $E/H$ as follows:
$$(E/ H)^0=E^0\setminus H   \ \ \mbox{and} \ \  (E/H)^1=\{e\in E^1\;| \ r(e)\notin H\}.$$ The source and range maps of $E/H$ are the source and range maps  restricted from $E$.

We are now in position to present the second main result of this section describing the structure of Leavitt path algebras of sandpile graphs, which extends Zelmanov et al.'s result \cite[Theorem 1]{Zel} to the sandpile graph setting.

\begin{theorem} \label{structheo1}
Let $\K$ be a field, $E$ a sandpile graph and let $t$  be the maximal length of a chain of elements in $\mathcal{C}_E$. Then  $L_{\K}(E)$ has a finite chain of graded ideals, $0 < I_0 < I_1 < \cdots < I_t = L_{\K}(E)$, satisfying the following conditions:

$(1)$ $I_0 = \rm{Soc}$$(L_{\K}(E)) \cong \M_{\Lambda_s}(\K)$, where $\Lambda_s$ is the set of paths in $E$ ending at $s$, 

$(2)$ $I_{i+1}/I_i$ $(0 \le i < t)$ is a finite sum of purely infinite simple Leavitt path $\K$-algebras and  matrix algebras $\M_{\Lambda}(\K[x, x^{-1}])$, where $\Lambda$ is a nonempty set,

$(3)$ $I_t/I_{t-1}$ is a finite sum of unital purely infinite simple Leavitt path $\K$-algebras and matrix algebras $\M_{n}(\K[x, x^{-1}])$, where $n\in \mathbb{N}^+$;

$(4)$ The ideals $I_i$ are invariant under every graded automorphism of $L_{\K}(E)$.

Consequently, the lattice $\rm{Idem}(SP(E))$ is a chain if and only if $I_{i+1}/I_i$ is either a  purely infinite simple Leavitt path $\K$-algebra or a matrix algebra $\M_{\Lambda}(\K[x, x^{-1}])$ for all $0 \le i \le t-1$.
\end{theorem}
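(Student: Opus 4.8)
The plan is to build the chain $0 < I_0 < I_1 < \cdots < I_t = L_{\K}(E)$ out of the hereditary saturated subsets of $E^0$ determined by the chain structure of $\mathcal{C}_E$, and then to analyze the subquotients via the hedgehog-graph machinery. First I would set up the "level filtration" of $\mathcal{C}_E$: for $0 \le i \le t$ let $W_i$ be the set of strongly connected cyclic components $C$ such that every chain in $\mathcal{C}_E$ ending at $C$ has length at most $i+1$ — equivalently, the "height" of $C$ is at most $i$ — and put $H_i := \overline{S_E \cup \bigcup_{C\in W_i} C^0}$, with $H_{-1} := S_E$. By Lemma~\ref{fil-hered} and Lemma~\ref{fil-ideal} (and the claim in the proof of the latter that $C \le C'$ iff $H_C \subseteq H_{C'}$) each $H_i$ is a nonempty saturated hereditary subset, the $H_i$ form an increasing chain with $H_t = E^0$, and $H_{-1} = S_E$ is the smallest element of $\mathcal{H}_E$. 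Then define $I_i := I(H_i)$, the ideal of $L_{\K}(E)$ generated by $H_i$; since distinct saturated hereditary subsets give distinct graded ideals by \cite[Theorem 2.5.8]{TheBook}, this is a strictly increasing chain of graded ideals with $I_t = L_{\K}(E)$, and $I_{-1} := I(S_E)$ would be omitted in favor of starting at $I_0$.

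Next I would identify $I_0$. The vertices in $S_E$ are precisely those not connecting to any cycle; in a sandpile graph these are exactly the vertices $v$ for which every path from $v$ reaches the sink $s$ without meeting a cycle, so the tree $E_{S_E}$ is acyclic with unique sink $s$, and $L_{\K}(E_{S_E})$ — equivalently the ideal generated by $S_E$, via Theorem~\ref{hedgehog graphtheo} applied with $H = S_E$ — is the socle of $L_{\K}(E)$ and is isomorphic to $\M_{\Lambda_s}(\K)$, where $\Lambda_s$ is the set of paths ending at $s$; this is the standard description of the socle of a Leavitt path algebra in terms of line points, so (1) follows. For (2) and (3) I would analyze $I_{i+1}/I_i$. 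By the standard isomorphism $I(H_{i+1})/I(H_i) \cong L_{\K}\big((E/H_i)(\overline{H_{i+1}\setminus H_i})\big)$ or more directly $I_{i+1}/I_i \cong I\big(H_{i+1}\setminus H_i\big)$ inside $L_{\K}(E/H_i)$, the quotient is a direct sum indexed by the strongly connected cyclic components $C$ of $E/H_i$ lying at level exactly $i+1$; each summand is $I(H_C)$ inside $L_{\K}(E/H_i)$, which by Lemma~\ref{hedgehoggraph-simple} is either $\M_{\Lambda}(\K[x,x^{-1}])$ when $C$ is a bare cycle (part (2) of that lemma) or purely infinite simple when $C$ has an exit inside its own restriction graph (part (3)). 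For the top quotient $I_t/I_{t-1}$ the relevant components are maximal in $\mathcal{C}_E$, so the corresponding hedgehog graphs are finite and the set $\Lambda$ of part (2) becomes finite, i.e. $n \in \mathbb{N}^+$; and since $E$ is finite, $I_t/I_{t-1} = L_{\K}(E)/I_{t-1}$ has local units concentrated at the finitely many vertices outside $H_{t-1}$, giving unitality of each simple summand. That yields (3).

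For (4), invariance under graded automorphisms: a graded automorphism of $L_{\K}(E)$ permutes the graded ideals, hence induces an automorphism of the lattice $\mathcal{L}_{\mathrm{gr}}(L_{\K}(E)) \cong \mathcal{H}_E$ (Theorem~\ref{classideal}); I would argue that each $H_i$ is characterized purely order-theoretically inside $\mathcal{H}_E$ — namely, via the isomorphism $\mathcal{H}_E \cong \mathrm{Ideal}(\mathcal{H}^s_E)$ of Lemma~\ref{fil-ideal}, the element $H_i$ corresponds to the ideal of all elements of $\mathcal{H}^s_E$ of height $\le i$ in the poset $\mathcal{C}_E$, and "height" is a lattice-theoretic invariant — so any lattice automorphism fixes $H_i$, hence any graded automorphism fixes $I_i$. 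The final equivalence then drops out: by Corollary~\ref{chain}, $\mathrm{Idem}(\mathrm{SP}(E))$ is a chain iff $\mathcal{C}_E$ is a chain; and $\mathcal{C}_E$ is a chain precisely when at each level $i+1$ there is a single relevant component, i.e. each $I_{i+1}/I_i$ is a single summand rather than a genuine direct sum — which is exactly the statement that each $I_{i+1}/I_i$ is itself either a purely infinite simple Leavitt path algebra or a single matrix algebra $\M_{\Lambda}(\K[x,x^{-1}])$.

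The main obstacle I anticipate is making the "subquotient $\cong$ sum over level-$(i+1)$ components, each given by a hedgehog graph" step precise: one must pass to the quotient graph $E/H_i$, check that its strongly connected cyclic components at the bottom level are exactly the level-$(i+1)$ components of $E$, verify that an exit of $C$ in $E$ survives as an exit of $C$ inside the restriction graph $(E/H_i)_{H_C}$ (so that Lemma~\ref{hedgehoggraph-simple}(3) applies), and handle the bookkeeping of how the index sets $\Lambda$ (paths ending in $C$ not using all edges of $C$) behave — finite exactly at the top. A secondary subtlety is the socle identification in (1): one should confirm that in a sandpile graph the line points are precisely $S_E$ and that no line point lies on or below a cycle, which uses the defining property that every vertex connects to the sink together with finiteness of $E$.
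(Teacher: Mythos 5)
Your construction of the filtration and your treatment of parts (1)--(3) and of the final equivalence follow essentially the same route as the paper: the paper builds the chain iteratively, adjoining at each stage the strongly connected cyclic components of the current quotient graph without exits, identifies the subquotients as direct sums of hedgehog algebras $L_\K(E(C^0))$ via Theorem \ref{hedgehog graphtheo} and Lemma \ref{hedgehoggraph-simple}, and obtains the socle description of $I_0$ from $\overline{\{s\}}=S_E$; your ``level filtration'' by height in $\mathcal{C}_E$ produces the same chain of hereditary saturated sets, only described globally rather than recursively. Where you genuinely diverge is part (4). The paper proves invariance of the $I_i$ under graded automorphisms by a fairly heavy ring-theoretic argument: it writes $\alpha(I_1)=I(H)$, analyses $\alpha(I_1)/I_0\cong L_\K(E(H)/S_E)$, excludes higher components from $H$ using an indecomposability criterion (\cite[Theorem 5.2]{Clark2017}), and then matches indecomposable summands via \cite[Lemma 3.8]{LamBook}. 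Your argument --- a graded automorphism induces a lattice automorphism of $\mathcal{L}_{\mathrm{gr}}(L_\K(E))\cong\mathcal{H}_E$, which by Birkhoff duality for finite distributive lattices comes from a poset automorphism of $\mathcal{C}_E$, which preserves height and hence fixes each $W_i$ and each $H_i=\bigvee_{C\in W_i}H_C$ --- is shorter, purely order-theoretic, and bypasses the decomposition machinery; what it requires is the identification of the join-irreducibles of $\mathcal{H}_E$ above $S_E$ with the $H_C$, $C\in\mathcal{C}_E$, which is exactly what Lemmas \ref{fil-hered} and \ref{fil-ideal} supply.

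Two concrete cautions. First, your indexing is off by one: with $t$ the maximal chain length, heights run from $0$ to $t-1$, so your sets are $H_{-1}=S_E\subsetneq H_0\subsetneq\cdots\subsetneq H_{t-1}=E^0$; you must keep $I(S_E)$ as the bottom term of the chain (the theorem requires $I_0=\mathrm{Soc}(L_\K(E))$, and your own second paragraph identifies it as such), not ``omit'' it, and relabel $I_{j+1}:=I(H_j)$. Second, fix the direction of the order on $\mathcal{C}_E$ explicitly: your construction is correct precisely under the convention $C\le C'\Leftrightarrow H_C\subseteq H_{C'}$ that you cite, under which the height-$0$ components are those all of whose exits lead into $S_E$; under the opposite (reachability) convention your $W_0$ would consist of the source components and $H_0$ would already be all of $E^0$, collapsing the chain.
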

\begin{proof}
Let $I_0$ be the ideal of $L_{\K}(E)$ generated by $s$. Let $S_E$ be the hereditary saturated subset of $E^0$ including all vertices which do not connect to any cycle. It is obvious that the hereditary saturated closure of $\{s\}$ is $S_E$, and so $I_0 = I(S_E)$, the ideal of $L_{\K}(E)$ generated by $S_E$. By \cite[Corollary 2.6.5]{TheBook}, $I_0 \cong \M_{\Lambda_s}(\K)$, where $\Lambda_s$ is the set of paths in $E$ ending at $s$. By \cite[Theorem 5.2]{AMMS}, $I_0= I(S_E)$ is exactly the socle of the Leavitt path algebra $L_{\K}(E)$. By \cite[Theorem 2.4.15]{TheBook}, $L_{\K}(E)/I_0 \cong L_{\K}(E/S_E)$. Let $\{C_{11}, C_{12}, \hdots, C_{1k_1}\}$ be the set of all strongly connected cycle components of $E/S_E$ without exits. We denote by $C^0_{1i}$ the set of all vertices which lie on $C_{1i}$. Let $I_1$ be the ideal of $L_{\K}(E)$ generated by $\cup^{k_1}_{i=1}C^0_{1i}$. Since every vertex in $E$ connects to $s$, we must have $I_0 < I_1$. Let $H_1$ be the hereditary saturated closure of $\cup^{k_1}_{i=1}C^0_{1i}$. By Theorem \ref{hedgehog graphtheo}, $I_1 = I(H_1)\cong L_{\K}(E(H_1))$, and so $I_1/I_0\cong \oplus^{k_1}_{i=1} I^i_1$, where $I^i_1$ is the ideal of $L_{\K}(E/S_E)$ generated by $C^0_{1i}$. Clearly, each $C^0_{1i}$ is a hereditary subset of $E^0\setminus S_E$. By Theorem \ref{hedgehog graphtheo}, $I_1/I_0\cong \oplus^{k_1}_{i=1} L_{\K}(E(C^0_{1i}))$.
If $C_{1i}$ is simply a cycle, then $L_{\K}(E(C^0_{1i}))\cong \M_{\Lambda_v}(\K[x, x^{-1}])$, where $\Lambda_v$ is the set of all paths in $E$ which end at $v:= s(C_{1i})$, but which do not contain all the edges of $C_{1i}$, by Lemma \ref{hedgehoggraph-simple} (2). If $C_{1i}$ has at least two distinct cycles, then we claim that $L_{\K}(C_{1i})$ is purely infinite simple. Indeed, since $C_{1i}$ is strongly connected, $C_{1i}$ has only the trivial hereditary and saturated subsets. Since $C_{1i}$ has both at least two distinct cycles and is strongly connected, every cycle in $C_{1i}$ has an exit. By \cite[Theorem 3.1.10]{TheBook}, $L_\K(C_{1i})$ is purely infinite simple, showing the claim. Then, by Lemma \ref{hedgehoggraph-simple} (3), we immediately obtain that $L_{\K}(E(C^0_{1i}))$ is purely infinite simple.
	
By \cite[Theorem 2.4.15]{TheBook}, $L_{\K}(E)/I_1 \cong L_{\K}(E/H_1)$. Let $\{C_{21}, C_{22}, \hdots, C_{2k_2}\}$ be the set of all strongly connected cycle components of $E/H_1$ without exits. Let $I_2$ be the ideal of $L_{\K}(E)$ generated by $\cup^{k_2}_{i=1}C^0_{2i}$. Similar to the above argument, we have 
$I_1 < I_2$, $I_2 \cong L_{\K}(E(H_2))$ and $I_2/I_1\cong \oplus^{k_2}_{i=1} L_{\K}(E(C^0_{2i}))$, where $L_{\K}(E(C^0_{2i}))$ is either a purely infinite simple Leavitt path $\K$-algebras or isomorphic to $\M_{\Lambda}(\K[x, x^{-1}])$. We continue to repeat this process $t+1$ times. We then obtain that  $L_{\K}(E)$ has a finite chain of graded ideals, $0 < I_0 < I_1 < \cdots < I_t = L_{\K}(E)$, such that $I_{i+1}/I_i$ is a finite sum of purely infinite simple Leavitt path $\K$-algebras and matrix algebras $\M_{\Lambda}(\K[x, x^{-1}])$, where $\Lambda$ is an infinite set, for $0\le i\le t-2$. For $i = t-1$, we have $I_t/I_{t-1}\cong \oplus^{k_t}_{i=1} L_{\K}(E(C^0_{ti}))$, where $\{C_{t1}, \hdots , C_{tk_t}\}$ is the set of all minimal elements of the partially ordered set $(\mathcal{C}_E, \le)$. Hence, $F(C^0_{ti})$ is finite for all $1\le i\le k_t$. This implies that $L_{\K}(E(C^0_{ti}))$ is either a unital purely infinite simple Leavitt path $\K$-algebras or isomorphic to $\M_{n}(\K[x, x^{-1}])$ for some $n\in \mathbb{N}^+$.

Let $\alpha$ be a graded automorphism of $L_{\K}(E)$. Then, since $I_0 = \rm{Soc}$$(L_{\K}(E))$, $I_0 = \alpha(I_0)$. We next claim that $I_1= \alpha(I_1)$. Indeed, since $I_0 < I_1$, $I_0 = \alpha(I_0) < \alpha(I_1)$. Since $I_1$ is a graded ideal of $L_{\K}(E)$, so is $\alpha(I_1)$. By \cite[Theorem 2.5.8]{TheBook}, $\alpha(I_1)$ is equal to the ideal $I(H)$ of $L_{\K}(E)$ generated by a hereditary and saturated subset $H$ of $E^0$. By Theorem \ref{hedgehog graphtheo}, $\alpha(I_1)\cong L_{\K}(E(H))$, and so $\alpha(I_1)/I_0\cong L_{\K}(E(H)/S_E)$. Since $\alpha$ is a graded automorphism of $L_{\K}(E)$, $\alpha(I_1)/I_0\cong I_1/I_0$ as $\mathbb{Z}$-graded $\K$-algebras. Therefore,  $\alpha(I_1)/I_0\cong \oplus^{k_1}_{i=1} L_{\K}(E(C^0_{1i}))$. Assume that $C^0_{ij} \subset H$ for some $2\le i\le t$ and $1\le j\le k_i$. Then, $C_{ij}$ connects to $C_{1h}$ for some $1\le h\le k_1$. Let $\overline{C^0_{1h}}$ be the hereditary and saturated subset of $(E(H)/S_E)^0 = E(H)^0\setminus S_E$. Let $c$ be an arbitrary cycle in $C_{ij}$, and $p$ the infinite path $c^{\infty} = c c \cdots c\cdots$ in $C_{ij}$. Clearly, the set $c^0$ of all vertices which lie on $c$ is contained in $(E(H)/S_E)^0\setminus \overline{C^0_{1h}}$, and the set of all paths $x$ such that
$s(x)\in c^0$ and $r(x)\in \overline{C^0_{1h}}$ is different from the empty set. By \cite[Theorem 5.2]{Clark2017}, $\alpha(I_1)/I_0\cong L_{\K}(E(H)/S_E)$ is indecomposable. This implies that $k_1 =1$, that means, $\alpha(I_1)/I_0$ is either a purely infinite simple Leavitt path $\K$-algebras or isomorphic to $\M_{\Lambda}(\K[x, x^{-1}])$. Hence, $\alpha(I_1)/I_0$ is graded simple. On the other hand, $\alpha(I_1)/I_0\cong L_{\K}(E(H)/S_E)$ always contains a proper graded ideal $I(\overline{C^0_{1h}})$, a contradiction. Therefore, we obtain that $C^0_{ij} \nsubseteq H$ for all $2\le i\le t$ and $1\le j\le k_i$; equivalently, $C^0_{ij} \cap H = \emptyset$ for all $2\le i\le t$ and $1\le j\le k_i$ (since $H$ is a hereditary subset of $E^0$). Since $\alpha(I_1)/I_0\neq 0$, $C^0_{1i} \subset H$ for some $1\le i\le k_1$. We note that every strongly connected cycle component of $E(H)$ is exactly a strongly connected cycle component of $E$. Therefore, we have $$\alpha(I_1)/I_0\cong L_{\K}(E(H)/S_E) \cong \bigoplus_{j\in J} L_{\K}(E(C^0_{1j}))$$ for some nonempty subset $J \subseteq \{1, 2, \hdots, k_1\}$, and so 
$$\bigoplus_{j\in J} L_{\K}(E(C^0_{1j}))\cong \alpha(I_1)/I_0\cong I_1/I_0\cong \bigoplus^{k_1}_{i=1} L_{\K}(E(C^0_{1i})).$$
Then, since each $L_{\K}(E(C^0_{1i}))$ is indecomposable and by \cite[Lemma 3.8]{LamBook}, we must have $J = \{1, 2, \hdots, k_1\}$, that means, $C^0_{1i} \subset H$ for all $1\le i\le k_1$. This shows that $\alpha(I_1) = I(H) = I_1$, proving the claim. Similar to the above argument, we obtain  that $\alpha(I_i) = I_i$ for all $0\le i \le t$. 

Assume that $\rm{Idem}(SP(E))$ is a chain. Then, by Corollary \ref{chain}, the partially ordered set $\mathcal{C}_E$	is also a chain. By Items (2) and (3), we immediately obtain that $I_{i+1}/I_i$ is either a  purely infinite simple Leavitt path $\K$-algebra or a matrix algebra $\M_{\Lambda}(\K[x, x^{-1}])$ for all $0 \le i \le t-1$. Conversely, by the construction of all the ideals $I_i$, the partially ordered set $(\mathcal{C}_E, \le)$ is a chain. By Corollary \ref{chain}, the lattice $\rm{Idem}(SP(E))$ is a chain,  thus finishing the proof.
\end{proof}	

For clarification, we illustrate Theorem \ref{structheo1} by presenting the following example.

\begin{example}
Let $K$ be a field and $E$ the following pictured graph
	$$ \xymatrix{&\bullet^{v_2}\ar[rd]\ar@(ur,ul)_{e_1}\ar@(dl,dr)_{e_2}&\\
		\bullet^{v_4}\ar@(ld,lu)^{C_4}\ar[ru]\ar[rd]&&\bullet^{v_1}\ar[r]\ar@(ul,ur)^{C_1}&\bullet_s\\
		&\bullet^{v_3}\ar[ru]\ar@(dr,dl)^{C_3}&}$$
Then we have $\mathcal{C}_E = \{C_1, C_2, C_3, C_4\}$, where $C_2 = \xymatrix{\bullet^{v_2}\ar@(ur,ul)_{e_1}\ar@(dl,dr)_{e_2}}$, and the chain $C_1 \le C_2\le C_4$ is a chain of maximal length in $\mathcal{C}_E$. Then, the chain of graded ideals of $L_K(E)$, $I_0 < I_1 < I_2 < I_3$, introduced in Theorem \ref{structheo1}, is constructed as follows: $I_0 = I(\{s\})$, $I_1 =I(\{v_1\}) = I(\{v_1, s\})$, $I_2 =I(\{v_3, v_2\}) = I(\{v_3, v_2, v_1, s\})$ and 
$I_3 =I(\{v_4\}) = I(\{v_4, v_3, v_2, v_1, s\})$, where $I(H)$
is the ideal of $L_K(E)$ generated by $H$. We have that $I_1/I_0$ is the ideal of $L_K(E/S_E)$ generated by $v_1$, where $S_E =\{s\}$, and so $I_1/I_0\cong \M_{\Lambda_1}(K[x, x^{-1}])$, where $\Lambda_1$ is the set of all paths in $E$ ending at $v_1$.  Also, $I_2/I_1$ is  is the ideal of $L_K(E/H_1)$ generated by $\{v_2, v_3\}$, where $H_1 =\{v_1, s\}$, and so $I_2/I_1\cong L_K(E(\{v_2\})) \oplus L_K(E(\{v_3\}))$, $L_K(E(\{v_2\})) \cong \M_{\Lambda_2}(L_K(C_2))$ is purely infinite simple, and $L_K(E(\{v_3\}))\cong  \M_{\Lambda_3}(K[x, x^{-1}])$, where $\Lambda_i$ is the set of all paths in $E$ ending at $v_i$. Finally, $I_3/I_2$ is  is the ideal of $L_K(E/H_2)$ generated by $\{v_4\}$, where $H_2 =\{v_3, v_2, v_1, s\}$, and so $I_3/I_2\cong K[x, x^{-1}]$.
\end{example}

The weighted Leavitt path algebra $L_{\K}(E, \omega)$ of a weighted graph $(E, \omega)$ over a field $\K$ is {\it vertex-simple} if $\mathcal{L}_{\rm{ver}}(L_{\K}(E, \omega)) = \{L_{\K}(E, \omega)\}$.

Let $E$ be a sandpile graph, $S_E$ the hereditary and saturated subset of $E$ which does not connect to any cycle and $(E, \omega)$ the  balanced weighted graph associated with $E$. We write $(E/S_E, \omega_r)$ to indicate that the weight function being considered on $E/S_E$ is the restriction function of the weight function $\omega$ to $E/S_E$. 

We close the article by describing completely the structure of the (weighted) Leavitt path algebra of a sandpile graph $E$ such that the sandpile monoid of $E$ has exactly two idempotents.

\begin{theorem}\label{twoidem}
Let $\K$ be a field, $E$ a sandpile graph and $S_E$ be the set of vertices in $E$ which does not connect to any cycle. Then the following statements are equivalent:
	
$(1)$ $\SP(E)$ has exactly  two idempotents;

$(2)$ The Leavitt path algebra $L_\K(E/S_E)$ is either isomorphic to $\M_n(K[x,x^{-1}])$ for some $n$ or purely infinite simple; 
	
$(3)$ The weighted Leavitt path algebra $L_\K(E/S_E, \omega_r)$ is vertex-simple.
\end{theorem}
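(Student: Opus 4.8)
The plan is to reduce all three conditions to the size of the lattice $\mathcal H_E$. By Theorem~\ref{classideal} (together with Theorem~\ref{idem-fiter-here-ordideal}) we have $\rm{Idem}(\SP(E))\cong\mathcal H_E\cong\mathcal F_E$ as lattices, and since $S_E$ is the least and $E^0$ the greatest member of $\mathcal H_E$, statement~(1) is equivalent to $\mathcal H_E=\{S_E,E^0\}$ (with $S_E\subsetneq E^0$). A finite poset whose lattice of filters has exactly two elements is a singleton (any poset with at least two elements admits a proper nonempty principal filter, hence at least three filters), so under~(1) the poset $\mathcal C_E$ consists of a single strongly connected cyclic component $C$, and in particular the maximal chain length $t$ in $\mathcal C_E$ equals $1$. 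First I would record this reduction; then the work is to match $\mathcal H_E=\{S_E,E^0\}$ against (2) and (3).

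For (1)$\Rightarrow$(2): with $t=1$, Theorem~\ref{structheo1} supplies a chain $0<I_0<I_1=L_\K(E)$ with $I_0=\rm{Soc}(L_\K(E))$ and $I_1/I_0=L_\K(E)/I_0\cong L_\K(E/S_E)$ (by \cite[Theorem 2.4.15]{TheBook}). Since $\rm{Idem}(\SP(E))$ is a chain (a two-element poset is), the final clause of Theorem~\ref{structheo1} forces $I_1/I_0=I_t/I_{t-1}$ to be a single purely infinite simple Leavitt path $\K$-algebra or a single $\M_\Lambda(\K[x,x^{-1}])$; combined with item~(3) of that theorem ($I_t/I_{t-1}$ is unital purely infinite simple or $\M_n(\K[x,x^{-1}])$ with $n\in\mathbb N^+$), this gives $L_\K(E/S_E)\cong\M_n(\K[x,x^{-1}])$ for some $n$, or purely infinite simple. (Alternatively one argues directly: any exit from $C$ in $E$ lands at a vertex connecting to no cycle, hence in $S_E$, so $C$ has no exits in $E/S_E$; moreover $\overline{C^0}\cup S_E\in\mathcal H_E$ strictly contains $S_E$ and so equals $E^0$, i.e.\ $\overline{C^0}=(E/S_E)^0$; thus $L_\K(E/S_E)$ is the ideal of itself generated by $C^0$, so $L_\K(E/S_E)\cong L_\K((E/S_E)(C^0))$ by Theorem~\ref{hedgehog graphtheo}, and Lemma~\ref{hedgehoggraph-simple}(2)--(3) finishes, the matrix size being finite because $E/S_E$ is finite with $C$ its only strongly connected cyclic component.)

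For (2)$\Rightarrow$(1): both $\M_n(\K[x,x^{-1}])$ and purely infinite simple algebras are nonzero and graded simple, so $S_E\subsetneq E^0$ and $L_\K(E/S_E)\cong L_\K(E)/I(S_E)$ has no proper nonzero graded ideal. The graded ideals of $L_\K(E)/I(S_E)$ correspond to the graded ideals of $L_\K(E)$ containing $I(S_E)$; since every nonzero graded ideal of $L_\K(E)$ contains $s$, hence $S_E$, hence $I(S_E)$, these are precisely the nonzero graded ideals, which by Theorem~\ref{classideal} form a lattice isomorphic to $\mathcal H_E$. Graded simplicity of $L_\K(E/S_E)$ therefore yields $|\mathcal H_E|=2$, i.e.\ (1). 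Finally, for (1)$\Leftrightarrow$(3): the part of the proof of Theorem~\ref{classideal} establishing $\mathcal H_E\cong\mathcal L_{\rm ver}(L_\K(E,\omega))$ uses only relations (ii)--(iv) of the weighted Leavitt path algebra and \cite[Theorem 2.10]{HN}, so it applies to the finite vertex weighted graph $(E/S_E,\omega_r)$ verbatim and gives $\mathcal L_{\rm ver}(L_\K(E/S_E,\omega_r))\cong\{\text{nonempty saturated hereditary subsets of }(E/S_E)^0\}$; by the standard correspondence between saturated hereditary subsets of the quotient graph $E/S_E$ and those of $E$ containing $S_E$ (see \cite{TheBook}), the assignment $H'\mapsto H'\cup S_E$ identifies this with $\mathcal H_E\setminus\{S_E\}$. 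Hence $L_\K(E/S_E,\omega_r)$ is vertex-simple iff $(E/S_E)^0$ is the only nonempty saturated hereditary subset of itself iff $|\mathcal H_E\setminus\{S_E\}|=1$ iff $|\mathcal H_E|=2$ iff (1) holds.

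The main obstacle is the bookkeeping in (1)$\Rightarrow$(2): one must ensure that a single cyclic component produces a single simple/matrix summand (not a finite direct sum) and that the matrix ring has finite size, and this is precisely where the hypothesis is spent. It is cleanest to harvest this from Theorem~\ref{structheo1}, whose proof already runs the hedgehog-graph analysis, rather than redo that analysis; the remaining three implications are routine once the lattice identifications of Theorems~\ref{classideal} and~\ref{structheo1} are in hand.
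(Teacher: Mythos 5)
Your proof is correct and follows essentially the same route as the paper's: the implication (1)$\Rightarrow$(2) is harvested from Theorem \ref{structheo1} exactly as the paper does it, and the remaining equivalences rest on the same ingredients --- the lattice identifications of Theorem \ref{classideal}, graded simplicity of $\M_n(\K[x,x^{-1}])$ and of purely infinite simple algebras, and \cite[Theorem 2.10]{HN} --- merely reorganized as a hub through $|\mathcal{H}_E|=2$ instead of the paper's cycle $(1)\Rightarrow(2)\Rightarrow(3)\Rightarrow(1)$. The only cosmetic difference is that you transfer the vertex-ideal lattice to the quotient $(E/S_E,\omega_r)$ directly (correctly noting that the relevant part of the argument does not need $E/S_E$ to be a sandpile graph), whereas the paper stays with $L_{\K}(E,\omega)$ and passes to the quotient at the end.
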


\begin{proof}
(1) $\Longrightarrow$ (2). Assume that $\SP(E)$ has only two idempotents. By Theorem \ref{classideal}, $E$ has only one strongly connected cyclic component $\mathcal{C}$. By Theorem \ref{structheo1}, we obtain that $L_\K(E/S_E)$ is either isomorphic to $\M_n(K[x,x^{-1}])$ for some $n$ or purely infinite simple, as desired.

(2) $\Longrightarrow$ (3). By our hypothesis and \cite[Proposition 3.1.14 ]{TheBook}, the Leavitt path algebra $L_{\K}(E/S_E)$ is graded simple, and so $\mathcal{H}_E =\{E, S_E\}$. Then, by Theorem \ref{classideal}, the lattice $\mathcal{L}_{\text{ver}}(L_{\K}(E, \omega)) = \{I(S_E), L_{\K}(E, \omega)\}$, where $I(S_E)$ is the ideal of $L_{\K}(E, \omega)$ generated by $S_E$. By \cite[Theorem 2.10]{HN}, $L_{\K}(E/S_E, \omega_r)\cong L_{\K}(E, \omega)/I(S_E)$.
It follows that $L_\K(E/S_E, \omega_r)$ is vertex-simple.
	
(3) $\Longrightarrow$ (1). Assume that $L_\K(E/S_E, \omega_r)$ is vertex-simple. We should note that $I(S_E)$ is the smallest ideal of $L_{\K}(E, \omega)$ which is generated by vertices. By \cite[Theorem 2.10]{HN}, $L_{\K}(E, \omega)/I(S_E)\cong L_{\K}(E/S_E, \omega_r)$. Therefore, $\mathcal{L}_{\text{ver}}(L_{\K}(E, \omega)) =\{L_{\K}(E, \omega), I(S_E)\}$. By Theorem \ref{classideal},  $\SP(E)$ has exactly two idempotents, thus finishing the proof.
\end{proof}

\section{Acknowledgements}
The first author acknowledges Australian Research Council grant DP230103184. The second author was supported by the Vietnam Academy of Science and Technology under grant
CTTH00.01/24-25. 




\bigskip
\bigskip

\end{document}